\newtheorem{thm}{Theorem}
\newtheorem{lem}{Lemma}
\theoremstyle{definition}
\newtheorem{defn}{Definition}
\newtheorem{rem}{Remark}
\renewcommand{\Re}{\mathbb R}
\def\bea{\begin{eqnarray}}
\def\eea{\end{eqnarray}}
\DeclareMathOperator{\grad}{grad}
\DeclareMathOperator{\inter}{int}
\DeclareMathOperator{\conv}{conv}
\begin{document}
\title[Tracking equilibrium points]{Tracking critical points on evolving curves and surfaces}
\author[G. Domokos, Z. L\'angi \and A. \'A. Sipos]{G\'abor Domokos, Zsolt L\'angi \and Andr\'as \'Arp\'ad Sipos}

\address{G\'abor Domokos, MTA-BME Morphodynamics Research Group and Dept. of Mechanics, Materials and Structures, Budapest University of Technology,
M\H uegyetem rakpart 1-3., Budapest, Hungary, 1111}
\email{domokos@iit.bme.hu}
\address{Zsolt L\'angi, MTA-BME Morphodynamics Research Group and Dept. of Geometry, Budapest University of Technology,
Egry J\'ozsef u. 1., Budapest, Hungary, 1111}
\email{zlangi@math.bme.hu}
\address{Andr\'as \'A. Sipos, MTA-BME Morphodynamics Research Group and Dept. of Mechanics, Materials and Structures, Budapest University of Technology,
M\H uegyetem rakpart 1-3., Budapest, Hungary, 1111}
\email{siposa@eik.bme.hu}

\subjclass{53A05, 53Z05}
\keywords{equilibrium, convex surface, Poincar\'e-Hopf formula, polyhedral approximation, curvature-driven evolution.}

\begin{abstract}

In recent years it became apparent that geophysical abrasion can be well characterized by the time evolution $N(t)$ of the number $N$ of static balance points of the abrading particle. Static balance points correspond to the critical points of the particle's surface represented as a scalar distance function $r$, measured from the center of mass of the particle, so their time evolution can be expressed as $N(r(t))$. The mathematical model of the particle can be constructed on two scales: on the macro (global) scale the particle may be viewed as a smooth,  convex manifold described by the smooth distance function $r$ with $N=N(r)$ equilibria, while on the micro (local) scale the particle's natural model is a finely discretized, convex polyhedral approximation  $r^{\Delta}$ of $r$, with $N^{\Delta}=N(r^{\Delta})$ equilibria.  There is strong intuitive evidence suggesting that under some particular evolution models (e.g. curvature-driven flows) $N(t)$ and $N^{\Delta}(t)$ primarily evolve in the opposite manner (i.e. if one is increasing then the other is decreasing and vice versa). This observation appear to be a key factor 
in tracking geophysical abrasion. Here we  create the mathematical framework necessary to understand these phenomenon more broadly, regardless of the particular evolution equation. We study micro and macro events in one-parameter families of curves and surfaces, corresponding to bifurcations triggering the jumps in $N(t)$ and $N^{\Delta}(t)$. Based on this analysis we show that the intuitive picture developed for curvature-driven flows is not only correct, it has universal validity,   as long as  the evolving surface $r$ is smooth. In this case, bifurcations associated with $r$ and $r^{\Delta}$ are coupled to some extent: resonance-like phenomena in $N^{\Delta}(t)$ can be used to forecast downward jumps in $N(t)$ (but not upward jumps). Beyond proving rigorous results for the $\Delta \to 0$ limit on the nontrivial interplay between singularities in the discrete and continuum approximations we also show that our mathematical model is structurally stable, i.e. it may be verified by computer simulations.
\end{abstract}
\maketitle

\tableofcontents

\section{Introduction}

\subsection{Motivation}

Recent work in geomorphology \cite{Domokos, Litwin, Williams} indicates that the shapes of sedimentary particles and the time ($t$) evolution of those shapes may be well characterized by the number $N(t)$ of mechanical balance points of the abrading particle. Such balance points correspond to the critical points of the scalar distance function $r$, measured from the center of mass $o$.  In this paper we develop the mathematical theory for the case when $r$ is a smooth, convex planar curve but we will also show numerical results for the non-smooth case and for surfaces.

There are two types of physical models describing the evolution of particles under abrasion. The first kind, which we might call  \emph{local model}, is based on discrete events when in a collision a small part of the abraded particle is broken off. The most natural geometrical setting for local models is a multi-faceted convex polyhedron and collisional events correspond to truncations with planes parallel to, and very close to tangent planes.  The second kind of model we might call  \emph{global} and it considers the averaged effect of many such micro collisions. The natural setting for global models is a smooth, convex body evolving under a geometric partial differential equation (PDE). Both model types are physically legitimate: at close inspection, the convex hulls of pebbles can be best approximated by multi-faceted polyhedra, on the other hand, it is equally possible to adopt the global view and approximate pebbles with smooth surfaces.

\begin{figure}
\includegraphics[width=\textwidth]{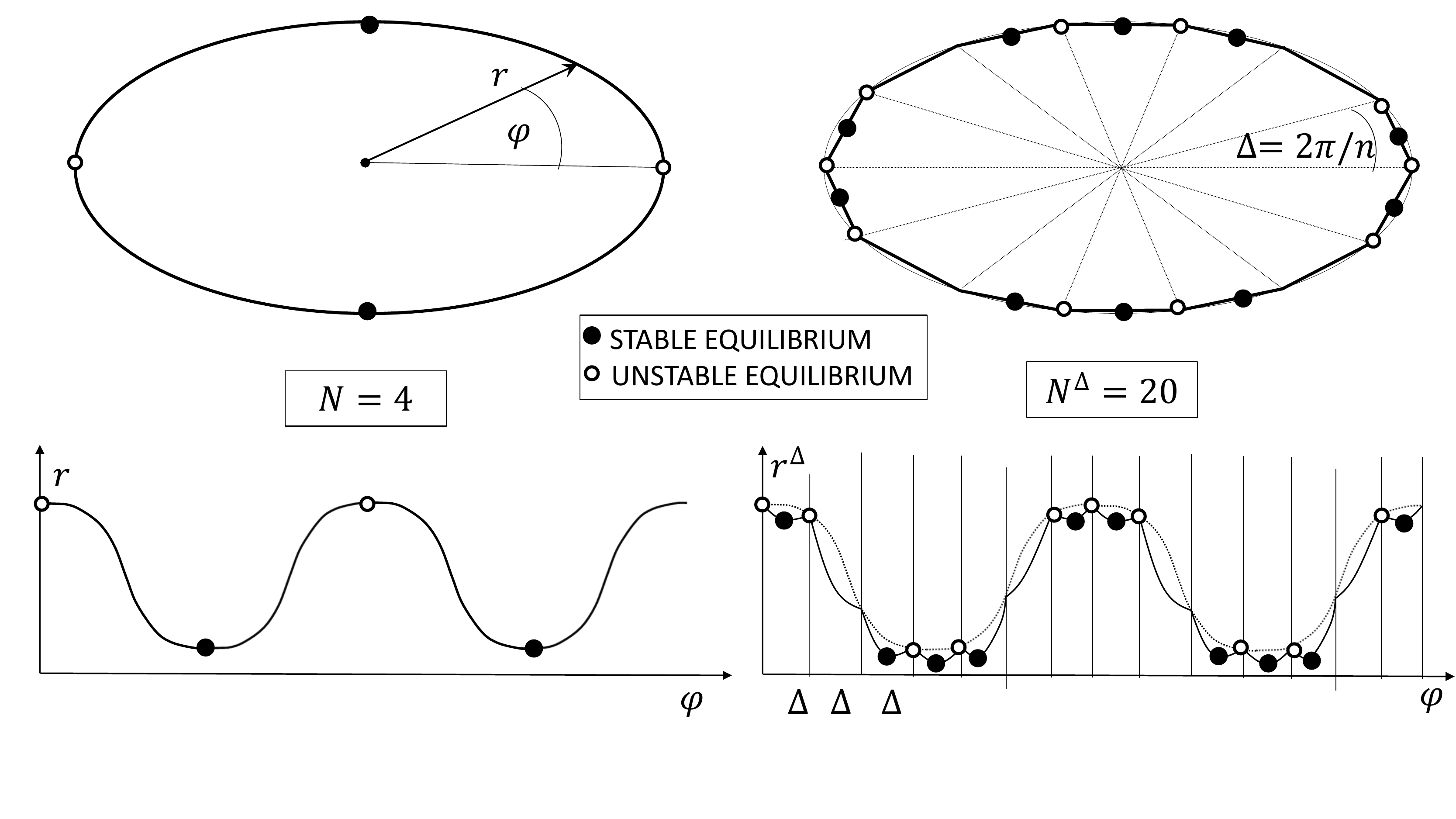}
\caption[]{Equilibria on a planar ellipse and its polygonal approximation. Upper left: ellipse in Euclidean plane. Lower left: distance function $r(\varphi)$ corresponding to the ellipse. Upper right: polygonal approximation in Euclidean plane. Lower right: distance function  $r^{\Delta}(\varphi)$ corresponding to polygon. Observe that most, but not all edges and vertices of the polygon carry equilibria. The latter accumulate in flocks, centered around the locations of the equilibria of the smooth curve.}
\label{fig:planar}
\end{figure}
The distance functions describing these two types of models are, of course, related: in case of a fine polyhedral approximation the two surfaces (smooth global model and polyhedral discrete model) are close to each other in the $C^1$-norm. We denote the size of the largest polyhedral face by $\Delta$ and the two distance functions by $r$ and $r^{\Delta}$, respectively. Mechanical equilibria correspond to the critical points of $r$ and $r^{\Delta}$. We will refer to these points as global and local equilibria and denote their numbers by $N$ and $N^{\Delta},$ respectively.   Both the smooth function $r$ and its polyhedral approximation $r^{\Delta}$ may carry equilibria of different stability types. In three dimensions we have three generic types: stable, saddle an unstable. For example, vertices of a polyhedron may carry unstable equilibrium points, edges may carry saddle-type equilibrium points and faces may carry stable equilibrium points. The numbers $N$ and $N^{\Delta}$ refer to the number of equilibria belonging to any of the aforementioned stability types. In two dimensions we just have two generic stability types: stable and unstable equilibria follow each other alternating along the smooth curve $r$ or its fine polygonal approximation $r^{\Delta}$. Figure \ref{fig:planar} illustrates these concepts for a planar, elliptical disc and its discretized, polygonal approximation. Although critical points appear to be related to first derivatives (they are defined by vanishing gradient), nevertheless, the $C^1$-proximity of the two functions does not imply that $N$ and $N^{\Delta}$ are close. As we showed in \cite{Monatshefte}, in the $\Delta \to 0$ limit $N^{\Delta}$ does not, in general, converge to $N$.

The time dependence $N(t)$ of the number of global critical points has been broadly investigated in various evolution equations   \cite{Damon, Domokos, Matgeo, Eikonal, Grayson, Kuijper}. Our goal here is rather different: instead of studying any particular evolution equation (which we will use only as illustrations) we focus on some universal features relating $N(t)$ to $N^{\Delta}(t)$.  Earlier results appear to suggest intuitively that,  as long as $r(t)$ is smooth, $N(t)$ and $N^{\Delta}(t)$ tend to evolve in the opposite directions: in \cite{VarkonyiDomokos} it was shown that it is always possible to increase the number of equilibria via suitable, small truncations, however, the opposite is not true: in general, it is not possible to reduce the number of equilibria by a small local truncation. These results were further advanced in \cite{robust} where the concept of robustness
was introduced to measure the stability of $N$ with respect to truncations of the solid. By distinguishing between upward and downward robustness (measuring the difficulty to increase or decrease $N$, respectively) it was again found that upward robustness is, in general, much smaller than downward robustness. These results suggest that in the local, polygonal model, which could be realized in a randomized chipping algorithm \cite{Sipos,Krapivsky}, $N^{\Delta}(t)$ would tend to increase under subsequent, small random truncations. On the other hand, there are results \cite{Grayson, Domokos} showing that, at least in curvature-driven, global PDE models which could be regarded as continuum analogies of the aforementioned chipping algorithms, $N(t)$ tends to decrease.

In this paper we will show that the indicated opposite trend of  $N^{\Delta}(t)$ and  $N(t)$ is indeed universal, and it is independent of the particular type of evolution model
as long as $r(t)$ remains smooth. Our paper will focus on this case, and we will show that resonance-like phenomena in $N^{\Delta}(t)$ may help to predict \emph{downward} jumps in $N(t)$.  
  The nontrivial coupling between  $N^{\Delta}(t)$ and  $N(t)$ may be better understood  intuitively via an analogy to a mechanical oscillator.
In case of of a damped, driven harmonic oscillator resonance occurs whenever the driving frequency approaches the natural frequency of the oscillator, i.e. an \emph{extrinsic} quantity approaches and \emph{intrinsic} one. In our problem
we associate two scalars with a point $p$ of a smooth curve: the distance $r(p)$ between $p$ and the center of mass $o$,
and the radius of curvature $R(p)$ at $p$. In the analogy, $r(p)$ is the extrinsic and $R(p)$ is the intrinsic quantity. (In three dimensions we have two intrinsic quantities: the two principal radii) The size $\Delta$ of the discretization is analogous to damping  and $N^{\Delta}$ is analogous to the amplitude of the oscillation. If $r(p)=R(p)$ then in the $\Delta \to 0$ limit we can observe as $N^{\Delta}\to \infty$.  
However,  the analogy is incomplete, because in the case of the harmonic oscillator 
a single amplitude-frequency diagram is sufficient to describe the generic response of the system. In our geometric setting
there exist two, distinct generic scenarios which we explain below.

As $r(p) \to R(p)$,  the trajectory of the  center of mass $o$ approaches the evolute $E^{\Delta}$ or $E$ corresponding to $r^{\Delta}$ and $r$, respectively \cite{robust, Poston}, see also Remarks \ref{rm:caustic1}, \ref{rm:caustic2}. We will refer to the intersection between the trajectory of $o$ and the evolute $E^{\Delta}$ or $E$  as micro and macro events, respectively. It is well-known \cite{Poston} that these intersections trigger upward or downward jumps in the integer-valued functions $N^{\Delta}(t)$ and  $N(t)$. In the generic case an event is equivalent to a codimension one, saddle-node bifurcation. Such a bifurcation can
occur in two different manners: either creating or annihilating one pair of equilibrium points. As we will show, micro events, in general, do not trigger macro events, however, the opposite is not true: jumps in $N(t)$ occur whenever $r(p)=R(p)$ and 
here we have always $\lim_{\Delta \to 0}N^{\Delta}=\infty$. However, the time evolution $N^{\Delta}(t)$ will depend on thy type
of macro-event in an asymmetric manner (cf. Figure \ref{fig:events}):
\begin{itemize}
\item
If $N(t)$ increases by 2 then we call this a (generic) creation
and the corresponding macro event in $N^{\Delta}(t)$ will be of type $C$.
\item If $N(t)$ decreases by 2 then we call it a (generic) annihilation and the corresponding macro event in
$N^{\Delta}(t)$ will be of type $A$.
\end{itemize}
We will prove the existence of these macro events in Theorem \ref{thm:caustic} and discuss the exact evolution
of $N^{\Delta}$ in their vicinity in Section \ref{ss:jump}.

\begin{figure}
\includegraphics[width=\textwidth]{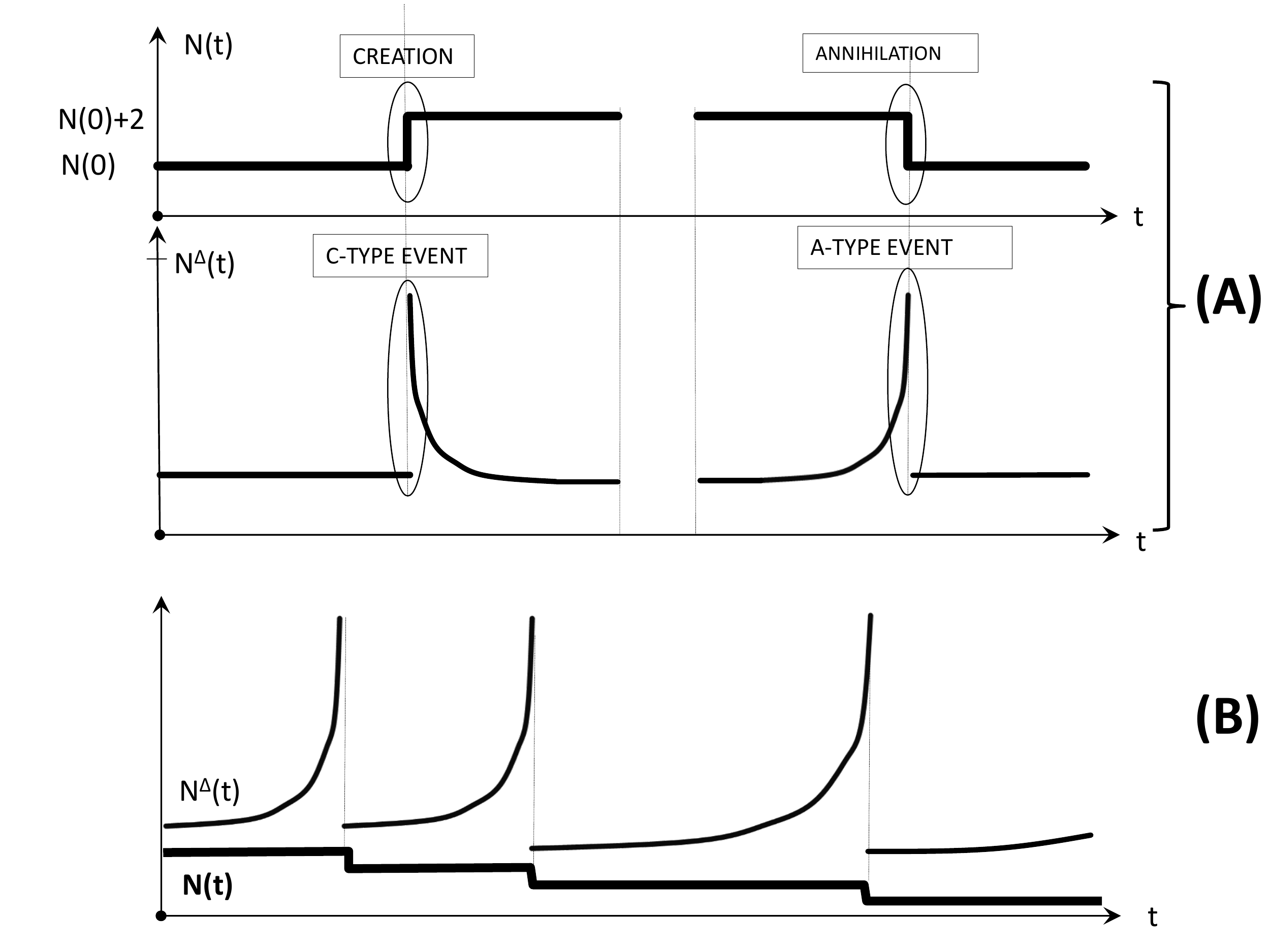}
\caption[]{Co-evolution of $N(t)$ and $N^{\Delta}(t)$. (A) Related macro events
(B) Qualitative example explaining the observations of "opposite" evolution of $N(t)$ and $N^{\Delta}(t)$.
Observe that, although the value $N^{\Delta}(t)$ may
not increase in an averaged sense, at almost all times $N^{\Delta}(t)$ appears to be increasing, while
$N(t)$ is monotonically decreasing.}
\label{fig:events}
\end{figure}

The existence of coupled macro-events sheds light on the previous intuition about the opposite co-evolution of $N^{\Delta}(t)$ and  $N(t)$. As we can see in Figure \ref{fig:events}(B), if $N(t)$ evolves in a monotonic fashion (i.e. it has jumps only in one direction) then, due to the coupling between corresponding macro events,  for \emph{most of the time} the evolution of $N^{\Delta}(t)$ will be in the opposite direction. This does not imply that $N^{\Delta}(t)$ will be monotonic in any averaged sense,
however, in a generic case, locally it will almost always appear to be monotonic. Figure \ref{fig:events}(B) illustrates the qualitative trends
for $N^{\Delta}(t)$  and $N(t)$ in the $\Delta \to 0$ limit.  Since the mesh-size $\Delta$ is analogous to damping, in
computer simulations, for finite values of $\Delta$ we expect to see  finite versions of $C$-type
and $A$-type events as well as  small fluctuations of $N^{\Delta}(t)$, cf. Figure \ref{fig:coev}.  

Beyond explaining earlier observations, these results can also be of practical use. Both computer simulations of the PDEs describing abrasion processes and related laboratory measurements are inherently discrete, one good example is the study of surfaces of natural pebbles  which, while rolling on a horizontal plane, are supported on their convex hull \cite{Domokosetal}. The latter is well approximated by a many-faceted polyhedron $r^{\Delta}$ (with faces of maximal size $\Delta$), on which, by studying its detailed 3D scanned images,
one can clearly observe large numbers $N^{\Delta}$ of adjacent equilibria in strongly localized \emph{flocks}. The macro events of type $A$ and $C$ in the evolution of $N^{\Delta}(t)$ correspond to the explosion of these local flocks into huge \emph{critical flocks} the size and evolution of which we explore in Subsections \ref{ss:size} and \ref{ss:time}.
If we approximate the polyhedron $r^{\Delta}$  by a sufficiently smooth surface $r$, then, in a generic case, we can see that the flocks of equilibria on $r^{\Delta}$ appear in the close vicinity of the (isolated) equilibrium points of $r$ (cf. Figure \ref{fig:pebble}), however, the latter may not be directly observed on the polyhedral image. In computer simulations the opposite happens: a smooth surface $r$ is replaced by its fine $r^{\Delta}$ discretization and computations are performed on the latter. As we can see, it is often the case that we have means to monitor $N^{\Delta}(t)$ but we may not be able to directly monitor $N(t)$, although the latter is of prime physical interest \cite{Domokos}. In such cases by using Theorem \ref{thm:caustic}, $N(t)$ may be obtained simply via monitoring the $C$- and $A$-type events in $N^{\Delta}$ and computing $N(t)$ as
\begin{equation}
N(t)=N(0)+2(c-a),
\end{equation}
where $c$ and $a$ refer respectively to the number of $C$-type and $A$-type events observed in $N^{\Delta}(t)$.

These macro events also connect $N^{\Delta}$ to the aforementioned robustness concepts. The quantity $\sigma(t)=1/N^{\Delta}(t)$ may serve as a measure of downward robustness since whenever  $\sigma(t) \to 0$, the function $N(t)$ is approaching a downward step. Curiously,  $\sigma(t)$ does not carry  advance information on the approaching upward step in $N(t)$.
 The asymmetry in increasing/decreasing $N(t)$ is at the very heart of understanding natural abrasion processes and their mathematical models. One, rather delicate feature of these PDEs is whether they tend to increase, conserve or reduce $N$ \cite{Grayson, Domokos} and the ability to track and measure this phenomenon in experiments and computations is of key importance in the identification and scaling of the proper evolution equations. The dynamic theory for local equilibria, the central topic of this paper, appears to be a necessary step towards this goal.
Our paper is structured as follows.

To understand their evolution, the first natural question is to ask for the relationship between  $N^{\Delta}$ and $N$ on a fixed surface described by a generic distance function $r$. We addressed this problem (which we may call the \emph{static theory of equilibria}) in \cite{Monatshefte} and we obtained explicit formulae (to be reviewed in Subsection \ref{int:monatshefte}) for the size of the individual flocks (emerging in the $\Delta \to 0$ limit) surrounding generic critical points of $r$. However, those results do \emph{not}  permit the computation of the global value $N^{\Delta}$ on the whole surface because in \cite{Monatshefte} we did not exclude the existence of equilibria outside flocks. In Subsection \ref{ss:irregular} we complement the static theory by filling this gap; we will prove that local equilibria disconnected from flocks do not exist. In Subsection \ref{ss:random} we further strengthen these results by showing the structural stability of our formulae with respect to small random fluctuations in mesh size. The latter result validates computer simulations running with slightly unequal mesh size. The main focus of our current paper is the \emph{dynamic theory} which we develop in Section \ref{ss:jump}. We prove our results for planar curves, however, in Section \ref{sec:3D} we provide a visually attractive numerical example for the evolution of critical flocks on surfaces.

\begin{figure}
\includegraphics[width=\textwidth]{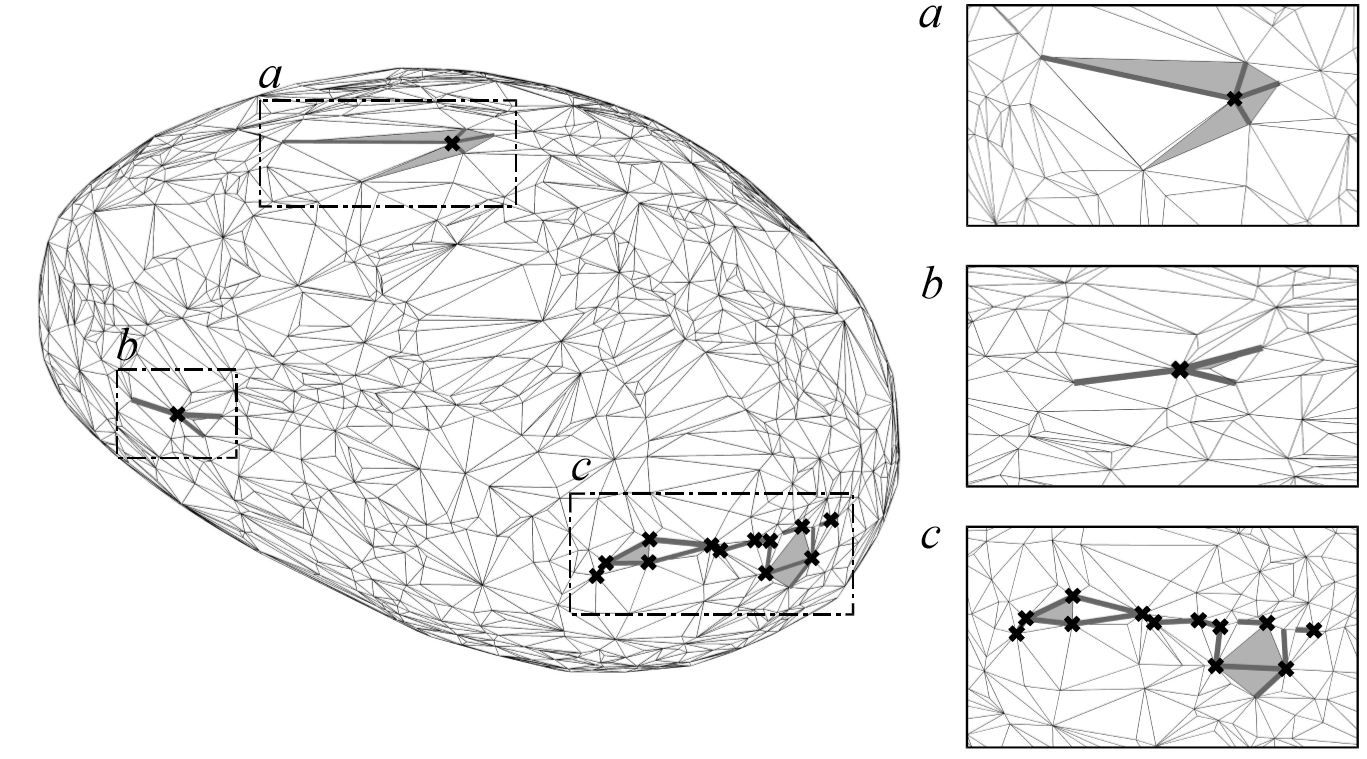}
\caption[]{ Flocks of equilibria observed on a scanned pebble. Polyhedron model of pebble displaying multiple equilibria
of all three stability types: vertices carrying (unstable) equilibria are marked with "X", edges carrying (saddle-type) equilibria marked with thick black line,  faces carrying (stable) equilibria are shaded grey. Unmarked vertices, edges and faces do not carry equilibria. Note that the marked equilibria
are spatially strongly localized, concentrated in areas marked by (a), (b) and (c). Such accumulation of polyhedral equilibria are referred to as "`flocks"'. Flocks appear in the vicinity of the equilibria of the smooth surface approximated by the multi-faceted polyhedron. Similar phenomena can be observed by finely discretized smooth curves where equilibria of the polygon form flocks around equilibria of the smooth curve.}
\label{fig:pebble}
\end{figure}

\subsection{Basic concepts}

\subsubsection{Mechanical equilibria}\label{int:mech}

The study of  equilibria of rigid bodies was initiated by Archimedes \cite{Archimedes1}; his results have been used in naval design even in the 18th century (cf. \cite{Archimedes2}). Archimedes' main concern was the number of the stable balance points of the body.

Mechanical equilibria of convex bodies correspond to the singularities of the gradient vector field characterizing their surface. Modern, global theory of the generic singularities of smooth manifolds appears to start with
the papers of Cayley \cite{Cayley} and Maxwell \cite{Maxwell} who independently introduced topological ideas into this field yielding results on the global number of stationary points. These ideas have been further generalized
by Poincar\'e and Hopf, leading to the Poincar\'e-Hopf Theorem \cite{Arnold} on topological invariants.
In case of topological spheres in two and three dimensions, the Poincar\'e-Hopf formula can be written as
\begin{equation} \label{Poincare}
\mbox{2D: } S-U=0, \mbox{    3D: } S+U-H=2,
\end{equation}
where $S,U,H$ denote the numbers of `sinks' (minima, corresponding to stable equilibria), `sources' (maxima, corresponding to unstable equilibria) and saddles, respectively. This formula can be also regarded as a variant
of the well-known Euler's formula \cite{Euler} for convex polyhedra.

Mechanical equilibria of polyhedra have also been investigated in their own right; in particular, the minimal number of equilibria attracted substantial interest. Monostatic polyhedra (i.e. polyhedra with just $S=1$ stable equilibrium point) have been studied in \cite{Conway},\cite{Dawson}, \cite{DawsonFinbow} and \cite{Heppes}.

The total number $N$ of equilibria $(N= S+U+H)$ has also been in the focus of research. In planar, homogeneous, convex bodies (rolling along their circumference on a horizontal support), we have $N\geq 4$ \cite{Domokos1}. However, convex homogeneous objects with $N=2$ exist in the three-dimensional space (cf. \cite{VarkonyiDomokos}).  Zamfirescu \cite{Zamfirescu} showed that for \emph{typical} convex bodies, $N$ is infinite.

\subsubsection{Fine discretizations: earlier results}\label{int:monatshefte}

While typical convex bodies are neither smooth objects, nor are they polyhedral surfaces, Zamfirescu's result
strongly suggests that equilibria in abundant numbers may occur in physically relevant scenarios. This is indeed the case if we study the surfaces of natural pebbles  which, while rolling on a horizontal plane, are supported on their convex hull \cite{Domokosetal} and exhibit \em flocks \rm of equilibria (cf. Figure \ref{fig:pebble}).

In \cite{Monatshefte} we provided a mathematical justification for this observation. We studied the inverse phenomenon: namely, we were seeking the numbers and types of static equilibrium points of the families of polyhedra $r^{\Delta}$ arising as equidistant $\Delta$-discretizations on an increasingly refined grid  of a smooth curve $r$ with $N$ generic equilibrium points, denoted by $m_i$ $(i=1,2,\dots N$). In the planar case, as $\Delta \to 0$, $r^{\Delta} \to r$ and we find that the diameter of each of the $N$ flocks on $r^{\Delta}$ (appearing around $m_i$) shrink and approach zero.
However, we also find that inside a fixed domain (centered at $m_i$), the numbers $S_i^{\Delta},U_i^{\Delta}$ of equilibria in each flock fluctuate around specific values  $S_i^0$ and $U_i^0$  that are independent of the mesh size
and the parametrization of the surface. We called these quantities the \emph{imaginary equilibrium indices} associated with $m_i$. We may eliminate the fluctuation of  $S_i^0$, $U_i^0$ by averaging over meshes in random
positions (with uniform distributions) and in Theorem 1 of \cite{Monatshefte} we obtained for the planar case that
\begin{equation}\label{mainresult2D}
S_i^0=1/|(\kappa_i\rho_i+1)|,  \quad \\ U_i^0= |\kappa_i\rho_i|/|(\kappa_i\rho_i+1)|,
\end{equation}
where $\rho_i = |m_i|$, and $\kappa_i$ is the (signed) curvature of $r$ at $m_i$. What we did \emph{not} prove was whether by summing over all imaginary equilibrium indices 
\begin{equation} \label{local2D}
N^0=\sum_{i=1}^{N} (S_i^0+U_i^0)
\end{equation}
actually provides \emph{all} equilibria on the $\Delta \to 0$ mesh. We will provide this result for the  $2$-dimensional case in Subsection \ref{ss:irregular} by proving Theorem \ref{thm:no_irregular} claiming that there exist no `irregular' equilibria on the $\Delta \to 0$ mesh which are separated from the flocks. In Subsection \ref{ss:random} we will prove Theorem \ref{thm:stochastic} about randomized meshes indicating that these formulae are robust and they approximately predict results computed on non-uniform meshes.

\section{Static theory: equilibria on finely discretized, fixed planar curves} \label{s:curves}

Throughout this section, we deal with a planar curve satisfying the $C^3$ differentiability property which has exactly one, non-degenerate equilibrium point $m$ with respect a given reference point $o$. Note that as a plane curve is a one-dimensional submanifold of $\Re^2$, there is a neighborhood of $m$ in which the examined curve can be defined as a simple $r : [\underline{\tau},\overline{\tau}] \to \Re^2$, $r(\tau):=(x(\tau),y(\tau))$ three times continuously differentiable curve, where $\underline{\tau} < 0 < \overline{\tau}$ and $m=r(0)$.
For the evolution of static equilibria we may write in  a more explicit notation $N(r(t,\tau),o(t))$ where $\tau$ is the spatial parametrization of $r$ and for planar curves $\tau$ is a scalar, for surfaces it is a vector. We will use the shorthand notation $N(t)$ if it is clear from the context which function $r$ and which reference point $o$ are involved.

By a suitable choice of the coordinate system, we may assume that our reference point $o$ is the origin, and that $m=r(0) = (0,\rho)$ is on the positive half of the $y$-axis; i.e. $\rho > 0$. This implies that $\dot{y}(0) = 0$, and we also assume that $\dot{x} > 0$.
We restrict our investigation to curves that are `locally convex' with respect to $o$, and whose equilibrium point is non-degenerate.
In other words, we assume that the signed curvature $\kappa$ of $r(\tau)$ at $m$ satisfies the inequalities $0 \neq 1+\kappa \rho < 1$.

Let $F^{\Delta}$ denote the $n$-segment equidistant partition of $[\underline{\tau},\overline{\tau}]$ with $\Delta = \frac{(\overline{\tau}-\underline{\tau})}{n}$.
If $i\Delta$ is a division point of $F^{\Delta}$, then we introduce the notation
$p^{\Delta}_i = r (i\Delta)$, where $\underline{\tau} \leq i\Delta \leq \overline{\tau}$.
Note that the indices of the points $p_i^{\Delta}$ are not necessarily integers, but real numbers that are congruent $\mod 1$. We denote the set of indices of the points $p_i^{\Delta}$ by $I$, and examine the equilibria of the approximating curve $P^{\Delta} = \bigcup_{i-1,i \in I} [p_{i-1}^{\Delta} , p_i^{\Delta}]$.
During our investigation, we assume that these equilibria are generic; that is, if $P^{\Delta}$ has an equilibrium point at a vertex $p_i^{\Delta}$, then the vector $p_i^{\Delta}$ is perpendicular to neither $p_{i+1}^{\Delta}-p_i^{\Delta}$ nor $p_{i-1}^{\Delta}-p_i^{\Delta}$.

For any $K > 0$, let $S^{\Delta}(K)$ denote the number of stable equilibria of $P^{\Delta}$ with respect to $o$ lying on the sides $[p_i^{\Delta},p_{i+1}^{\Delta}]$ satisfying $|i| \leq K$. We define the quantity $U^{\Delta}(K)$ for unstable equilibria of $P^{\Delta}$ analogously. In Theorem 1 of \cite{Monatshefte}, we proved that if $K=K(\rho,\kappa)$ is sufficiently large, then  in the ${\Delta} \to 0$ limit $S^{\Delta}$ and $U^{\Delta}$ will fluctuate around the so-called imaginary equilibrium indices given in equation (\ref{mainresult2D}).

Roughly speaking, we may say that these formulas hold for number of equilibria with `bounded' indices, and we observe that this result holds for \emph{all} sufficiently small $\Delta$, and the quantities in the formulas are independent of both $\Delta$ and the parametrization of the curve.

\subsection{Nonexistence of irregular equilibria} \label{ss:irregular}

Whereas it is easy to see that the equilibria of $P_{\Delta}$ are `physically' close to $m$ (i.e. for any equilibrium point $q$, $|q-m|$ is arbitrarily close to $0$ if $\Delta$ is sufficiently small), this property does not imply that the set of indices of equilibrium points is bounded by some $K$ independent of $\Delta$. Thus, the formulas in (\ref{mainresult2D}) do not necessarily provide the numbers of \emph{all} equilibrium points of $P^{\Delta}$.
This led in \cite{Monatshefte} to the following definition:

\begin{defn}
If $\{ p_{i_k}^{\Delta_k} \}$ is a sequence of equilibrium points of $P^{\Delta_k}$ with
$\lim\limits_{k \to \infty} \Delta_k = 0$ and $\lim\limits_{k \to \infty} i_k = \infty$, then the sequence $\{ p_{i_k}^{\Delta_k} \}$ is called an \emph{irregular} equilibrium sequence.
\end{defn}

In \cite{Monatshefte} it is remarked that if the coordinate functions of $r(\tau)$ are polynomials, then the curve has no irregular equilibrium sequence, and asked (Question 1, \cite{Monatshefte}) whether the same holds if the two coordinate functions are analytic.
Here we prove that the answer to this question is affirmative not only for every analytic, but for every $C^3$-class curve. This means that the quantities in (\ref{mainresult2D}) are valid for \emph{all} equilibrium points of $P^{\Delta}$, if ${\Delta}$ is sufficiently small.

\begin{thm}\label{thm:no_irregular}
If $\Delta$ is sufficiently small then there exist some values $k(\Delta),K(\Delta)$ such that $[p_i^{\Delta}, p_{i+1}^{\Delta}]$ contains an equilibrium point if, and only if $k(\Delta) \leq i \leq K(\Delta)$; i.e. for sufficiently small ${\Delta}$ the set of indices of equilibrium points is `connected'. In particular, the curve $r$ has no irregular equilibrium sequences.
\end{thm}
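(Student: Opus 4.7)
The plan is to rewrite the condition that the side $[p_{i-1}^{\Delta}, p_i^{\Delta}]$ carries an equilibrium as a sign condition on two scalar quantities that are $O(\Delta)$-close to the smooth function $h(\tau) := r(\tau) \cdot \dot{r}(\tau)$, and then to exploit the hypothesis that $h$ has a unique, non-degenerate zero on $[\underline{\tau},\overline{\tau}]$.

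First I would observe that the foot of perpendicular from $o$ onto the line through $p_{i-1}^{\Delta}$ and $p_i^{\Delta}$ lies in the closed segment if and only if
\[
\alpha_i := p_{i-1}^{\Delta} \cdot (p_i^{\Delta} - p_{i-1}^{\Delta}) \le 0 \le p_i^{\Delta} \cdot (p_i^{\Delta} - p_{i-1}^{\Delta}) =: \beta_i,
\]
with strict inequalities in the generic case, noting $\beta_i - \alpha_i = |p_i^{\Delta} - p_{i-1}^{\Delta}|^2 > 0$. Taylor-expanding $r$ around the midpoint $\bar{\tau}_i := (i - \tfrac12)\Delta$ and using the $C^3$ regularity, one obtains
\begin{align*}
\beta_i &= \Delta\, h(\bar{\tau}_i) + \tfrac{\Delta^2}{2} |\dot{r}(\bar{\tau}_i)|^2 + O(\Delta^3), \\
\alpha_i &= \Delta\, h(\bar{\tau}_i) - \tfrac{\Delta^2}{2} |\dot{r}(\bar{\tau}_i)|^2 + O(\Delta^3),
\end{align*}
with remainders uniform in $\tau \in [\underline{\tau},\overline{\tau}]$.

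Next I would invoke the standing assumptions: $m = r(0)$ is the only equilibrium of $r$, so $h$ vanishes only at $\tau = 0$; and non-degeneracy gives $h'(0) = |\dot{r}(0)|^2 (1 + \kappa\rho) \ne 0$. Hence there exist $\varepsilon, \delta > 0$ such that $h$ is strictly monotonic on $(-\varepsilon,\varepsilon)$ while $|h(\tau)| \ge \delta$ on $[\underline{\tau},\overline{\tau}] \setminus (-\varepsilon,\varepsilon)$. For $\Delta$ sufficiently small the perturbations $\pm \tfrac{\Delta}{2}|\dot{r}|^2 + O(\Delta^2)$ stay strictly below $\delta$ in magnitude, so $\alpha_i$ and $\beta_i$ inherit the common (nonzero) sign of $h(\bar{\tau}_i)$ whenever $\bar{\tau}_i \notin (-\varepsilon,\varepsilon)$, ruling out equilibria there. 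Inside $(-\varepsilon,\varepsilon)$, the functions $\beta_i/\Delta$ and $\alpha_i/\Delta$ (viewed as smooth functions of $\bar{\tau}_i$) remain monotonic for $\Delta$ small, each with a unique zero $\tau_{+}^{\Delta} < \tau_{-}^{\Delta}$, both of order $O(\Delta)$.

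The condition $\alpha_i < 0 < \beta_i$ thus reduces to $\bar{\tau}_i \in (\tau_{+}^{\Delta}, \tau_{-}^{\Delta})$, an open interval, so the set of admissible indices is a set of consecutive integers, yielding $k(\Delta)$ and $K(\Delta)$. Since $(\tau_{+}^{\Delta}, \tau_{-}^{\Delta}) \subset (-c\Delta, c\Delta)$ for some constant $c$ independent of $\Delta$, we obtain $|i - \tfrac12| \le c$, so $k(\Delta)$ and $K(\Delta)$ are bounded independently of $\Delta$; this immediately excludes irregular equilibrium sequences. The main technical hurdle is the uniformity of the Taylor remainders together with the verification that for $\Delta$ small the $O(\Delta^2)$ corrections do not destroy monotonicity of $\beta_i/\Delta$ and $\alpha_i/\Delta$ on $(-\varepsilon,\varepsilon)$; both follow from compactness combined with the $C^3$ hypothesis.
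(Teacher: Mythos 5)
Your proposal is correct, but it is a genuinely different argument from the paper's. The paper works with the implicit function $u(\tau)$ defined by $\langle r(\tau),\,r(\tau+u)-r(\tau)\rangle=0$ (the parameter shift at which the chord from $r(\tau)$ becomes orthogonal to the radius), proves via the Implicit Function Theorem and a Taylor expansion that $u'(\tau)>0$ for small $\tau$ --- the key positive quantity being $\langle\dot r,\dot r\rangle+\langle r,\ddot r\rangle$ --- and deduces connectedness of the index set from this monotonicity. You instead expand $\alpha_i,\beta_i$ about the side midpoints and identify them as $O(\Delta)$ perturbations of $h(\tau)=\langle r(\tau),\dot r(\tau)\rangle$ (whose derivative is exactly the paper's key quantity), then use the unique non-degenerate zero of $h$ to confine all sign changes to a window of width $\Theta(\Delta)$ around $\tau=0$. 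Your route buys a stronger conclusion with no extra work: the indices carrying equilibria are bounded independently of $\Delta$ (the window width $\approx\Delta/|1+\kappa\rho|$ even recovers (\ref{mainresult2D})), so irregular sequences are excluded immediately. The paper's route buys generality: as Remark~\ref{rem:no_irregular} notes, $u$ remains non-decreasing at a degenerate equilibrium, whereas your argument genuinely uses $h'(0)\neq 0$ for the monotonicity of $\beta_i/\Delta$ and $\alpha_i/\Delta$ near $0$.

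Two small points to patch when writing this up: the condition $\alpha_i\le 0\le\beta_i$ captures only stable (interior) equilibria, while the theorem also counts unstable equilibria at vertices, characterized by $\beta_i>0>\alpha_{i+1}$; the sign pattern you establish (each of the sequences $(\alpha_i)$, $(\beta_i)$ changes sign exactly once, within $O(1)$ indices of $0$) confines these to at most one additional side at each end of the same consecutive block, so the conclusion stands. Also, in the unstable case $1+\kappa\rho<0$ the ordering of the two zeros reverses, but the admissible window is again a nonempty interval of width $\Theta(\Delta)$.
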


\begin{proof}
We prove the assertion for the case that $\kappa \rho + 1 > 0$; that is, the curve has a stable equilibrium at $r(0)$, for the case that $\kappa \rho +1<0$ we may apply a similar argument.
For every (sufficiently small) $\tau > 0$, define  the function $u$ by the implicit equation $\langle r(\tau), r(\tau+u) - r(\tau) \rangle = 0$.
We show that this function is strictly increasing, if $\tau$ is sufficiently small.

Clearly, $\lim\limits_{\tau \to 0} u(\tau) = 0$. Thus, as $r(\tau)$ is $C^3$-class, there is some $\varepsilon > 0$ such that if $0 < \tau, u < \varepsilon$, for some continuous vector function  $C(\tau,u) \in \Re^2$, we have
\begin{equation}\label{eq:Taylor}
r(\tau+u) = r(\tau) + \dot{r}(\tau) u + \frac{1}{2} \ddot{r}(\tau) u^2 + C(\tau,u) u^3
\end{equation}

Consider the $2$-variable function $F(\tau,u)= \langle r(\tau), r(\tau+u) - r(\tau) \rangle$. If at a point $(\tau,u)$, $F(\tau,u) = 0$, then
$\partial_u F (\tau,u) = \langle r(\tau), \dot{r}(\tau+u) \rangle < 0$, since  the angle of the two vectors is greater than $\frac{\pi}{2}$ if $\tau > 0$.
Since all partials of $F(\tau,u)$ are continuous, by the Implicit Function Theorem $u(\tau)$ is continuously differentiable.
Furthermore,
\[
u'(\tau) = -\frac{\partial_{\tau} F(\tau,u)}{\partial_u F(\tau,u)} = -\frac{\langle \dot{r}(\tau), r(\tau+u)-r(\tau)\rangle + \langle r(\tau), \dot{r}(\tau+u)- \dot{r}(\tau)\rangle}{\langle r(\tau), \dot{r}(\tau+u) \rangle} .
\]

We have observed that for the denominator in this equation, we have $\langle r(\tau), \dot{r}(\tau+u) \rangle < 0$ for every $0 < \tau < \varepsilon$, if $F(\tau,u)=0$.
On the other hand, substituting (\ref{eq:Taylor}) into the numerator, we obtain that
\[
\langle \dot{r}(\tau), r(\tau+u)-r(\tau)\rangle + \langle r(\tau), \dot{r}(\tau+u)- \dot{r}(\tau)\rangle = u \left( \langle \dot{r}(\tau), \dot{r}(\tau) \rangle + \langle \ddot{r}(\tau), r(\tau) \rangle \right) +  C^{*}(\tau,u) u^2,
\]
where  $C^{*}(u,\tau)$ is a continuous function.
Note that $ \langle \dot{r}(\tau), \dot{r}(\tau) \rangle + \langle \ddot{r}(\tau), r(\tau) \rangle = \ddot{\langle r(\tau) , r(\tau) \rangle} > 0$ if $\tau$ is sufficiently small. Thus, as $u(\tau) \to 0$ if $\tau \to 0$, we have $u'(\tau) > 0$ if $\tau$ is sufficiently small, which yields that on this interval $u(\tau)$ is strictly increasing.
This proves the existence of $K(\Delta)$. To show the existence of $k(\Delta)$, we may apply the same consideration for the case that $\tau < 0$.
\end{proof}



\begin{rem}\label{rem:no_irregular}
The proof of Theorem~\ref{thm:no_irregular} yields that the function $u(\tau)$ is non-decreasing even if $m$ is a degenerate equilibrium point. Thus, the assertion in Theorem~\ref{thm:no_irregular} holds even in this case.
\end{rem}

\begin{rem}\label{rem:no_irregular1}
Theorem~\ref{thm:no_irregular} implies that if we sum all imaginary equilibrium indices according to formulae (\ref{mainresult2D}-\ref{local2D}) then we obtain the global imaginary equilibrium index $N^0$ associated with the curve $r$, approximating the number of \emph{all} local equilibrium points associated with the polygon $r^{\Delta}$ in the $\Delta \to 0$ limit.
\end{rem}

\subsection{Equilibria on random meshes}\label{ss:random}

In this subsection we prove a probabilistic version of the formulae (\ref{mainresult2D}) which are the main planar result in \cite{Monatshefte}, and deals with an equidistant, $n$-element partition of the interval $[\underline{\tau},\overline{\tau}]$. Our goal is to show that even if the discretization is non-uniform, those formulae provide good estimates, thus the numbers predicted by the formulae may be observed in computer simulations.

\begin{thm}\label{thm:stochastic}
Let $r : [\underline{\tau},\overline{\tau}] \to \Re^2$ be a $C^4$-class curve satisfying the conditions in the beginning of Section~\ref{s:curves}.
For arbitrary $n \geq 2$ and $\delta \leq \min \{ |\underline{\tau}|,\overline{\tau} \}$ define the probability distribution $\zeta(n,\delta)$ in the following way:
Choose $n$ points $\tau_1, \tau_2, \ldots, \tau_n$ independently and using uniform distribution on $[-\delta,\delta]$.
Label the points such that $\tau_1 \leq \tau_2 \leq \ldots \leq \tau_n$, and set $Q^n = \bigcup_{i=1}^{n-1} [r(\tau_{i}),r(\tau_{i+1})]$.
Then $\zeta_{n,\delta}$ is defined by
\[
p(\zeta_{n,\delta} = k) = \hbox{ the probability that } Q^n \hbox{ has } k \hbox{ stable equilibria with respect to } o ,
\]
where $k=0,1,\ldots,n-1$.
Then, for every $\varepsilon > 0$ there is some $\delta= \delta(r,n,\varepsilon) > 0$ such that
\[
\left| E(\zeta_{n,\delta}) - \frac{1}{\lambda} \left( 1- \frac{1}{(1+\lambda)^n} \right) \right| < \varepsilon ,
\]
where $\lambda =|1+\kappa \rho|$.
\end{thm}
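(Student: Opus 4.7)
The plan is to replace $r$ by its second-order Taylor polynomial at $m$, in which the condition that a polygonal edge carries a stable equilibrium becomes an explicit pair of linear inequalities in the ordered samples; to compute the expected number of such edges in that parabolic model via conditioning on the sign pattern and invoking the distribution of ratios of uniform order statistics; and to absorb the cubic Taylor remainder into a union bound. Since $n$ is fixed, an expectation error of order $O(n\delta)$ (with $n$-dependent constants) is admissible.

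First, using the setup of Section~\ref{s:curves}, I Taylor-expand $r(\tau) = (a\tau, \rho + b\tau^2) + R(\tau)$ with $R(\tau) = O(\tau^3)$, where $a = \dot{x}(0) > 0$ and $b = \tfrac{1}{2}\ddot{y}(0)$, so that $\kappa = 2b/a^2$ and $\lambda = 1 + \kappa\rho$. I focus on the stable case $\lambda \in (0,1)$, the other sign being symmetric. The characterization that the foot of the perpendicular from $o$ to the supporting line lies in the interior of the edge $[r(\tau_{(i)}), r(\tau_{(i+1)})]$ is
\[
\langle r(\tau_{(i)}), r(\tau_{(i+1)}) - r(\tau_{(i)})\rangle < 0, \qquad \langle r(\tau_{(i+1)}), r(\tau_{(i+1)}) - r(\tau_{(i)})\rangle > 0.
\]
Substituting the parabolic leading part, using $2b\rho = (\lambda - 1)a^2$, and dividing by the positive factor $(\tau_{(i+1)} - \tau_{(i)}) a^2 / 2$, these collapse to the pair of linear inequalities
\[
(1+\lambda)\tau_{(i)} + (\lambda - 1)\tau_{(i+1)} < 0, \qquad (\lambda - 1)\tau_{(i)} + (1+\lambda)\tau_{(i+1)} > 0,
\]
each carrying an additive correction of order $\delta^2$ coming from $R$ and higher-order terms (equivalently, of order $\delta$ after dividing by $\delta$ and working with the rescaled samples $\sigma := \tau/\delta$).

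Second, I evaluate the parabolic-model expectation exactly. Setting $\alpha := (1-\lambda)/(1+\lambda) \in (0,1)$, a case analysis in the three sign regimes of the ordered pair shows that any sign-crossing edge $\tau_{(i)} < 0 < \tau_{(i+1)}$ automatically satisfies both inequalities and contributes $1$; a fully-positive edge contributes iff $\tau_{(i)}/\tau_{(i+1)} < \alpha$; and a fully-negative edge iff $|\tau_{(i+1)}|/|\tau_{(i)}| < \alpha$. Conditioning on the sign-count $N^+ := \#\{j : \tau_j > 0\}$, which has distribution $\mathrm{Binomial}(n, 1/2)$ by symmetry, and invoking the classical identity that for $m$ i.i.d.\ uniform-on-$[0,1]$ order statistics $U_{(1)} < \cdots < U_{(m)}$ one has $P(U_{(j)}/U_{(j+1)} \leq \alpha) = \alpha^j$, the expected count in the parabolic model is
\[
P(0 < N^+ < n) + 2\, E\!\left[\sum_{j=1}^{N^+ - 1} \alpha^j\right] .
\]
A binomial-sum evaluation using the identities $1 - \alpha = 2\lambda/(1+\lambda)$ and $(1+\alpha)/2 = 1/(1+\lambda)$ collapses this to the stated closed form.

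Third, I bound the error. The cubic remainder $R(\tau)$, combined with the quadratic part of $r$, perturbs each normalized linear form by an additive term uniformly bounded by $C\delta$ in the rescaled variable $\sigma$, where $C$ depends only on $r$. Hence the exact and the parabolic sign decisions agree unless the rescaled linear form takes a value within an $O(\delta)$ band around zero. The joint density of the rescaled pair $(\sigma_{(i)}, \sigma_{(i+1)})$ on $[-1,1]^2$ is bounded by a constant depending only on $n$, so each such band event has probability of order $\delta$ (with $n$-dependent constant). A union bound over the $n-1$ edges and the two inequalities bounds the expected miscount by a quantity $O(n\delta)$ (again with $n$-dependent constant), and for fixed $n$ choosing $\delta = \delta(r, n, \varepsilon)$ small enough forces the total error below $\varepsilon$.

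I expect the main obstacle to be keeping the error bookkeeping clean, in particular ensuring that pathologically tight sample gaps do not inflate the ratio of the cubic correction to the linear leading term. The cleanest remedy is to first pull the common factor $(\tau_{(i+1)} - \tau_{(i)})$ out of each inner-product expression; once this is done both sides of each inequality scale uniformly with $\delta$ and the union-bound argument proceeds without further conditioning.
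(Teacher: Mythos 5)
Your proposal is correct and shares the paper's overall skeleton: Taylor-expand so that the equilibrium condition on each edge reduces to the same pair of linear inequalities (your normalized forms $(1+\lambda)\tau_{(i)}+(\lambda-1)\tau_{(i+1)}<0$ and $(\lambda-1)\tau_{(i)}+(1+\lambda)\tau_{(i+1)}>0$ are exactly the paper's $x_i+\frac{\kappa\rho}{2}(x_i+x_{i+1})<0$ and $x_{i+1}+\frac{\kappa\rho}{2}(x_i+x_{i+1})>0$ up to a factor $2$), evaluate the expectation exactly in the linearized model, and absorb the Taylor remainder by taking $\delta$ small. Where you genuinely diverge is the central computation. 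The paper fixes an edge index $i$, writes the probability that the pair of consecutive order statistics satisfies the inequalities as a ratio of simplex volumes, evaluates it by explicit integration, and sums over $i$ with the binomial theorem; you instead condition on the sign count $N^{+}\sim\mathrm{Bin}(n,1/2)$, note that every sign-crossing edge automatically carries a stable point while a one-signed edge does so iff the ratio of consecutive order statistics is below $\alpha=(1-\lambda)/(1+\lambda)$, and invoke the identity $P\bigl(U_{(j)}/U_{(j+1)}\le\alpha\bigr)=\alpha^{j}$. Your route trades the paper's somewhat heavy integrals for a standard order-statistics fact plus geometric and binomial sums, and your error control (pulling out the gap factor so the normalized linear form is perturbed by a uniform $O(\delta^{2})$ term, then a band-event union bound with $n$-dependent constants) is actually more explicit than the paper's ``almost always'' argument; the paper's route, on the other hand, yields the individual edge probabilities $p_i$, not just their sum. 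Three small caveats. First, carrying out your binomial evaluation gives $\frac{1}{\lambda}\bigl(1-(1+\lambda)^{-(n-1)}\bigr)$, which agrees with the final line of the paper's proof and with the remark following the theorem, but not with the exponent $n$ printed in the theorem statement (apparently a typo in the paper); so ``the stated closed form'' should be read with exponent $n-1$. Second, your normal form $r(\tau)=(a\tau,\rho+b\tau^{2})+O(\tau^{3})$ silently assumes $\ddot{x}(0)=0$; in general the quadratic term of $x(\tau)$ survives, but it enters the normalized inner products only at the same $O(\delta^{2})$ order as the cubic remainder, so your bookkeeping still covers it. Third, like the paper, you treat explicitly only the case $-1<\kappa\rho<0$; the remaining curvature cases require an analogous (not literally symmetric) case analysis, which the paper likewise omits as routine.
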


\begin{proof}
By the Implicit Function Theorem, the  coordinate function  $x(\tau)$ of $r$ is invertible in a neighborhood of  $\tau=0$, and thus, in this neighborhood the curve can be written as the graph of a function $y=f(x)$. Since the function $x=x(\tau)$ and its inverse are both $C^4$-class, a uniform distribution for $\tau$ on the interval $[-\delta,\delta]$ correspond to an `almost' uniform distribution of $x$ on the interval $[x(-\delta),x(\delta)]$ for sufficiently small values of $\delta$.  Thus, it suffices to prove the assertion for graphs of $1$-variable functions,  i.e. we may assume that $r$ is defined by $r(x) = (x,f(x))$ for some  $C^4$-class function $f$. Note  that then $\kappa = f''(0)$.

Now, choose some values $-\delta \leq x_1 \leq \ldots \leq x_n \leq \delta$ independently, and for  $i=1,2,\ldots, n$, let  $q_i=(x_i,f(x_i))$.
 Observe that there is an equilibrium point on the segment $[q_i,q_{i+1}]$ if, and only if $\langle q_i,q_{i+1}-q_i \rangle \leq 0 \leq \langle q_{i+1}, q_{i+1}-q_i \rangle$ (see also (3) in \cite{Monatshefte}).

In the following, we use the second-degree Taylor polynomial of $f$, by which we have
\[
f(x) = \rho + \frac{\kappa}{2} x^2 + C(x) x^3,
\]
where $C(x)$ is a  continuously differentiable function on $[-\delta,\delta]$, implying that there is some $K \in \Re$ such that $|C(x_2)-C(x_1)| \leq K |x_2-x_1|$ for every $x_1,x_2 \in [-\delta,\delta]$.
Then
\[
\left| \langle q_i,q_{i+1}-q_i \rangle - (x_{i+1}-x_i) \left( x_i + f(x_i) \frac{\kappa}{2}  (x_{i+1}+x_i) + C(x_{i+1})(x_{i+1}^2+x_ix_{i+1}+x_i^2)\right)\right| \leq
\]
\[
\leq K(x_{i+1}-x_i) x_i^3 .
\]
Thus, if $\delta$ is sufficiently small, the sign of  $\langle q_i,q_{i+1}-q_i \rangle$ is `almost always' equal to the sign of
$x_i+\frac{\kappa \rho}{2}(x_i+x_{i+1})$. We obtain similarly that the sign of $\langle q_{i+1}, q_{i+1}-q_i \rangle$ is
`almost always' equal to that of $x_{i+1}+\frac{\kappa \rho}{2}(x_i+x_{i+1})$.

Let  $\hat{\zeta}(n,\delta)$ denote the probability distribution where, choosing $n$ points $-\delta \leq x_1 \leq x_2 \leq \ldots \leq x_n \leq \delta$ uniformly and independently, $P(\hat{\zeta}(n,\delta) = k)$ is the probability that exactly $k$ pairs $\{x_i,x_{i+1} \}$ satisfy the inequalities
\begin{equation}\label{eq:conditions}
x_i+\frac{\kappa \rho}{2}(x_i+x_{i+1})< 0 \quad \mathrm{and} \quad x_{i+1}+\frac{\kappa \rho}{2}(x_i+x_{i+1}) > 0.
\end{equation}
By our previous argument, it suffices to prove that $E(\hat{\zeta}(n,\delta)) = \frac{1}{\lambda} \left( 1- \frac{1}{(1+\lambda)^n} \right)$.
To do it, we distinguish four cases depending on the value of $\kappa \rho$, and observe that convexity and the nondegeneracy of the equilibrium point implies that $0 > \kappa \rho \neq -1$.
These cases are $-1 < \kappa \rho < 0$, $-2 < \kappa \rho < -1$, $\kappa \rho = -2$, and $\kappa \rho > -2$.
We note that the computations in all these cases are almost identical, and thus, we carry them out only in the first case.

Accordingly, assume that $-1 < \kappa \rho < 0$.
We compute the probability  $p_i$ that (\ref{eq:conditions}) is satisfied for some fixed value of $i$.
Putting $\mu = -\frac{2 + \kappa \rho}{\kappa \rho} > 1$, in this case the inequalities in (\ref{eq:conditions}) are equivalent to $x_{i+1} \geq \mu x_i$ if $0 \leq x_i \leq \delta$, and $x_{i+1} \geq \frac{x_i}{\mu}$ if $-\delta \leq x_i \leq 0$.
Thus,  $p_i$ is equal to the fraction of the volumes of two regions. The region in the denominator is the simplex defined by the inequalities $-\delta \leq x_1 \leq x_2 \leq \ldots \leq x_n \leq \delta$; its volume is equal to $\frac{2^n \delta ^n}{n!}$. The region in the numerator is equal the union of two nonoverlapping regions, which are defined by the inequalities
\begin{enumerate}
\item[(8)] $0 \leq x_i \leq \frac{\delta}{\mu}$, $-\delta \leq x_1 \leq x_2 \leq \ldots \leq x_i$, $\mu x_i \leq x_{i+1} \leq \ldots, x_n \leq \delta$,
\item[(9)] $-\delta \leq x_i \leq 0$, $-\delta \leq x_1 \leq _2 \leq \ldots \leq x_i$ and $\frac{x_i}{\mu} \leq x_{i+1} \leq \ldots \leq x_n \leq \delta$.
\end{enumerate}
Hence,
\[
p_i = \frac{n!}{2^n \delta^n} \left( \int\limits_{-\delta}^0 \frac{(\delta+\tau)^{i-1}}{(i-1)!} \cdot \frac{\left(\delta-\frac{\tau}{\mu}\right)^{n-i}}{(n-i)!} \, d \tau +  \int\limits_{0}^{\frac{\delta}{\mu}} \frac{(\delta+\tau)^{i-1}}{(i-1)!} \cdot \frac{\left(\delta-\mu \tau\right)^{n-i}}{(n-i)!} \, d \tau \right)
\]
 By the linearity of expectation, we obtain that the expected value $E$ of the number of indices $i$ satisfying (\ref{eq:conditions}) is equal to $\sum\limits_{i=1}^{n-1} p_i$. Summing up and applying the Binomial Theorem, we have
\[
E =  \frac{n}{2^n \delta^n} \left( \int\limits_{-\delta}^0 \left( 2\delta +(1-\frac{1}{\mu}) \tau \right)^{n-1} - (\delta + \tau)^{n-1} \, d \tau +
 \int\limits_{0}^{\frac{\delta}{\mu}} \left( 2\delta + (1-\mu) \tau \right)^{n-1}-(\delta+\tau)^{n-1} \, d \tau \right), 
\]
from which an elementary computation yields that
\[
E = \frac{\mu + 1}{\mu -1} \left( 1 - \left( \frac{\mu+1}{ 2\mu} \right)^{n-1}\right) =
\frac{1}{\lambda}\left( 1- \frac{1}{(1+\lambda)^{n-1}} \right).
\]
\end{proof}

\begin{rem}
It is easy to check that the function $E(\lambda) = \frac{1}{\lambda}\left( 1- \frac{1}{(1+\lambda)^{n-1}} \right)$ is strictly decreasing on the interval $(0,\infty)$, and that $\lim\limits_{\lambda \to 0+0} E(\lambda) = n-1$.
\end{rem}


Finally, we show how one can reconstruct the number of equilibrium points of a smooth  plane curve $r$ from one of its sufficiently fine discretizations.
We state it  in a slightly different form, for graphs of functions. In our setting, the function $f$ in Remark~\ref{rem:2D} is the Euclidean distance function of $r$ from the given reference point.

\begin{rem}\label{rem:2D}
Let $f :  [\underline{x},\overline{x}] \to \Re$ be a $C^2$-class function with finitely many stationary points, each in the  open interval $(\underline{x},\overline{x})$, such that the second derivative of $f$ at each such point is not zero.  Assume that $f$ has $k$ local minima and $l$ local maxima in $(\underline{x},\overline{x})$. Let $\underline{x} = x_0 < x_1 < x_2 < \ldots < x_{n-1} < x_n = \overline{x}$ denote the division points of the equidistant $n$-element partition of $[\underline{x},\overline{x}]$.
Then, if $n$ is sufficiently large, there are exactly $k$ integers $0 < j < n$ satisfying $x_j < \min \{ x_{j-1}, x_{j+1} \}$, and $l$ integers $0 < j < n$ satisfying $x_j > \max \{ x_{j-1}, x_{j+1} \}$.
\end{rem}

\section{Dynamic theory: local equilibria on finely discretized, evolving planar curves}\label{ss:jump}

In this section, we deal with a $1$-parameter ($t$) family of closed convex curves $r_t(\tau)$, where $t \in [\underline{t},\overline{t}]$ is time, and $\tau \in [\underline{\tau},\overline{\tau}]$ is the spatial parameter. We assume that $r_t(\tau)$ is a $C^3$-class function of $\tau$, and this function, and all its derivatives with respect to $\tau$, depend continuously on $t$.

We denote the \emph{evolute} of the function $r_t$ by $E_t : [\underline{\tau},\overline{\tau}] \to \Re^2$.
We say that a point $E_t(\tau)$ of $E_t$ is \emph{general} if it is not a cusp, and for any $\tau' \neq \tau$, $E_t(\tau') \neq E_t(\tau)$. We say that $E_t$ is \emph{locally convex} at a general point $E_t(\tau)$ if $E_t(\tau)$ has a neighborhood $V$ in $\Re^2$ such that $E_t \cap V$ is a strictly convex curve. If $V$ satisfies this property, the convex, connected region of $(\Re^2 \setminus E_t) \cap V$ is called the \emph{convex side} of $E_t$ at $E_t(\tau)$, and the other region is called the \emph{concave side} of $E_t$.

\begin{thm}\label{thm:caustic}
Let $r_t:[\underline{\tau},\overline{\tau}] \to \Re^2$ be a $1$-parameter family of convex curves satisfying the conditions above.
Let $o: [\underline{t},\overline{t}] \to \Re^2$ be a continuous curve that transversely intersects $E_t$ at $E_{t^{\star}}(\tau_0) = p(t^{\star})$, and let $N(o(t))$ (resp.  $N^0(o(t))$) denote the number of global equilibrium points (the sum of the imaginary equilibrium indices) of $r_t$ with respect to $o(t)$. Assume that $E_{t^{\star}}$ is locally convex at the general point $E_{t^{\star}}(\tau_0)$.
\begin{itemize}
\item[(i)] If $o(t)$ moves from the convex side of $E_{t^{\star}}$ to its concave side as $t$ increases, then $N(o(t))$ increases by $2$ at $t=t^{\star}$, $\lim\limits_{t \to t^{\star}-0} N^0(o(t))= \mathrm{constant}$ and $\lim\limits_{t \to t^{\star}+0} N^0(o(t))= \infty$.
\item[(ii)] If $o(t)$ moves from the concave side of $E_{t^{\star}}$ to its convex side as $t$ increases, then $N(o(t))$ decreases by $2$ at $t=t^{\star}$, $\lim\limits_{t \to t^{\star}-0} N^0(p(t))= \infty$ and $\lim\limits_{t \to t^{\star}+0} N^0(p(t))= \mathrm{constant}$.
\end{itemize}
\end{thm}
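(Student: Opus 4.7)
The plan is to encode the equilibria as the zero set of an implicit function, verify that the crossing of $o(t)$ with the evolute triggers a transverse saddle-node bifurcation, and then read off the behavior of $N^0$ directly from formula~(\ref{mainresult2D}).

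I would begin by introducing the function $F(\tau,t) = \langle \dot r_t(\tau), r_t(\tau) - o(t)\rangle$, whose zeros in $\tau$ are precisely the equilibria of $r_t$ with respect to $o(t)$. Working in arc length and the Frenet frame $(T,N)$ of $r_{t^\star}$ near $\tau_0$, a direct computation gives $\partial_\tau F = 1 + \kappa \langle N, r_t - o\rangle$ and $\partial_\tau^2 F = \dot\kappa \langle N, r_t - o\rangle - \kappa^2 \langle T, r_t - o\rangle$. At the degenerate base point the equilibrium condition annihilates the second term and the evolute condition $o(t^\star) = E_{t^\star}(\tau_0)$ gives $\langle N, r - o\rangle = -1/\kappa$, whence $\partial_\tau^2 F(\tau_0, t^\star) = -\dot\kappa/\kappa$. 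Since the tangent to $E_{t^\star}$ at $E_{t^\star}(\tau_0)$ is parallel to $(\dot\kappa/\kappa^2) N$, the hypothesis that this point is not a cusp is exactly $\dot\kappa(\tau_0) \neq 0$, so $\partial_\tau^2 F$ is nonzero there.

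Next I would verify the transversality condition $\partial_t F(\tau_0, t^\star) \neq 0$. Because the tangent direction of $E_{t^\star}$ at the base point is $N$, transverse crossing of $o(t)$ with $E_{t^\star}$ forces $\dot o(t^\star)$ to have a nonzero $T$-component, and a short differentiation of $F$ in $t$ then produces $\partial_t F \neq 0$. With both non-degeneracy conditions in hand, a standard saddle-node normal-form reduction brings $F$ into the model $\widetilde F(\sigma, s) = s \pm \sigma^2$ near $(\tau_0, t^\star)$. This produces exactly two zeros of $F$ on one side of $t^\star$ and none on the other; the two resulting equilibria lie on opposite sides of $\tau_0$ and, since $\partial_\tau F = 1 + \kappa\rho$ (times a positive factor) changes sign across $\tau_0$, one is stable and one unstable, giving the asserted jump of $N$ by $\pm 2$. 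Matching the side of $t^\star$ on which the pair appears to the direction of crossing of $o(t)$ through $E_{t^\star}$ (convex-to-concave versus concave-to-convex) then separates case (i) from case (ii).

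The claim about $N^0$ follows from the index formula: each equilibrium $m_i$ contributes
\[
S_i^0 + U_i^0 = \frac{1 + |\kappa_i \rho_i|}{|1 + \kappa_i \rho_i|},
\]
which varies continuously in $t$ as long as $1 + \kappa_i \rho_i$ is bounded away from $0$. By Theorem~\ref{thm:no_irregular}, $N^0$ is a finite sum of such terms. On the side of $t^\star$ where the bifurcating pair exists, both members satisfy $1 + \kappa_i \rho_i \to 0$ as $t \to t^\star$, so their contributions diverge and $N^0 \to \infty$; on the other side the pair is absent and $N^0$ is a sum of continuous, bounded terms, hence tends to a finite constant. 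The main technical obstacle is the sign matching: one must show that the sign of $\partial_\tau^2 F \cdot \partial_t F$, which determines on which side of $t^\star$ the two zeros of $F$ appear, is linked precisely to whether $o(t)$ enters the concave or the convex side of $E_{t^\star}$. This is careful bookkeeping with the orientation of the Frenet normal, the sign of the signed curvature, and the local convexity of the evolute; once established, (i) and (ii) are immediate from the saddle-node count.
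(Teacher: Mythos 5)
Your treatment of $N^0$ (formula (\ref{mainresult2D}) for the imaginary indices, Theorem~\ref{thm:no_irregular} to ensure these account for everything, divergence of the two terms with $1+\kappa_i\rho_i \to 0$ on the side where the pair exists, bounded continuous sum on the other side) is exactly the paper's route, and the local computation $\partial^2_\tau F(\tau_0,t^{\star})=-\dot\kappa/\kappa\neq 0$ at a non-cusp is correct. The gap is in the bifurcation half. The theorem only assumes that $r_t$ and its $\tau$-derivatives depend \emph{continuously} on $t$ and that $o(t)$ is merely \emph{continuous}; hence $\partial_t F$ need not exist, and the saddle-node normal-form reduction, which requires a $C^1$ unfolding parameter together with the transversality $\partial_t F\neq 0$, is not available under the stated hypotheses. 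Moreover, even granting smoothness, your verification of transversality keeps only the $\langle T,\dot o(t^{\star})\rangle$ contribution; when the curve itself evolves, $\partial_t F$ also contains $\langle \partial_t\dot r_t, r_t-o\rangle+\langle \dot r_t,\partial_t r_t\rangle$, so the condition must be phrased relative to the \emph{moving} evolute $E_t$, and the nonvanishing of $\partial_t F$ does not follow from the $T$-component of $\dot o$ alone. The paper avoids all of this with a purely static argument (Lemma~\ref{lem:caustic}): for a fixed curve, $N(q)$ equals the number of normals through $q$, every normal is tangent to the evolute, and from a point on the concave side of a locally convex arc of $E$ there are exactly two tangents to $E$ near $E(\tau_0)$ while from the convex side there are none; hence $N$ differs by exactly $2$ across the evolute, and the dynamic statement follows by continuity in $t$ together with the classical fact that $N$ changes only when the reference point crosses the evolute.

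The second gap is the step you yourself defer as ``careful bookkeeping'': linking the sign of $\partial^2_\tau F\cdot\partial_t F$ to whether $o(t)$ enters the concave or the convex side of $E_{t^{\star}}$. That identification is precisely what distinguishes (i) from (ii) — creation versus annihilation — and without it your argument only shows that a pair of equilibria appears on one unidentified side of $t^{\star}$ and that $N^0$ diverges on that side; it does not yet yield the stated conclusions. In the paper this orientation question never arises, because the tangent-line count in Lemma~\ref{lem:caustic} directly tells you that the extra two equilibria are present exactly when the reference point lies on the concave side. So either adopt the static counting argument, or strengthen the hypotheses to $C^1$ dependence on $t$ (for both $r_t$ and $o$), compute the full $\partial_t F$ relative to the moving evolute, and then actually carry out the sign analysis you postponed.
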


As we remarked in the introduction, we call the events in (i) and in (ii) in the evolution  of $N^0(t)$ a \emph{$C$-type} and an \emph{$A$-type} event, respectively. The same events in the evolution of $N(t)$ correspond to a generic, codimension one saddle-node bifurcation and they are called in  bifurcation theory\emph{creation} and \emph{annihilation}, respectively \cite{Poston}.

Theorem~\ref{thm:caustic} is an immediate consequence of the following lemma, Theorem~\ref{thm:no_irregular}, and Theorem 1 of \cite{Monatshefte}.

\begin{lem}\label{lem:caustic}
Let $r:[\underline{\tau},\overline{\tau}] \to \Re^2$ be a  $C^3$-class closed, convex curve. Let the evolute of the curve be $E : [\underline{\tau},\overline{\tau}] \to \Re^2$.
Assume that $E$ is locally convex at a point $E(\tau_0)$. Let  $N(q)$ denote the number of equilibrium points of $r$ with respect to $q$. Then $E(\tau_0)$ has a neighborhood $V$ such that  $N(q) + 2 = N(q')$ for any point $ q \in V$ on the convex, and any point $ q' \in V$ on the concave side of $E$ at $E(\tau_0)$.
\end{lem}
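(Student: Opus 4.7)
My plan is to recast the count of equilibria as the count of zeros of
\[
G(\tau,q)=\langle r(\tau)-q,\dot r(\tau)\rangle,
\]
and interpret crossing the evolute as a fold (saddle–node) bifurcation of the one-variable family $\tau\mapsto G(\tau,q)$. The evolute is exactly the set of $q$ for which there exists some $\tau$ with $G=\partial_\tau G=0$, and using $E(\tau)=r(\tau)+N(\tau)/\kappa(\tau)$ together with $\dot E=-(\dot\kappa/\kappa^2)N$ one checks that $\dot\kappa(\tau_0)\neq 0$, equivalently $\partial_\tau^2 G(\tau_0,q_0)\neq 0$, is exactly the non-cusp condition on $E$ at $q_0$.

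First I would localize. For any other zero $\tau^{\star}\neq\tau_0$ of $G(\cdot,q_0)$ the general-point hypothesis forces $\partial_\tau G(\tau^{\star},q_0)\neq 0$, so the Implicit Function Theorem yields a smooth branch $\tau^{\star}(q)$ of zeros for $q$ near $q_0$, and the count of equilibria outside a fixed small neighborhood of $\tau_0$ is locally constant in $q$. It therefore suffices to count zeros of $G(\cdot,q)$ near $\tau_0$. I would parameterize $r$ by arclength at $\tau_0$ with Frenet frame $(T_0,N_0)$, write $q-q_0=\epsilon T_0+\eta N_0$ and $s=\tau-\tau_0$, and Taylor-expand using $r(\tau_0)-q_0=-N_0/\kappa_0$ to obtain
\[
G(\tau_0+s,\,q_0+\epsilon T_0+\eta N_0)= -\epsilon-\kappa_0\eta\,s-\frac{\dot\kappa_0}{2\kappa_0}\,s^2+\text{higher order}.
\]
Because $\dot\kappa_0\neq 0$ this is genuinely quadratic in $s$, with two real roots or none near $s=0$ according to the sign of the discriminant $D(\epsilon,\eta)=\kappa_0^2\eta^2-(2\dot\kappa_0/\kappa_0)\epsilon$.

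A parallel Taylor expansion of $E(\tau_0+s)-q_0$ in the same Frenet frame produces the parabola $\epsilon=(\kappa_0^3/2\dot\kappa_0)\eta^2+O(\eta^3)$, which coincides with the level set $D=0$; hence the bifurcation locus agrees with $E$ to the relevant order and transversely crossing $E$ changes the count by exactly two. To identify which side receives the extra pair I would use that a parabola $\epsilon=c\eta^2$ is strictly convex, so the convex component of its complement is the side into which the cup opens (the epigraph when $c>0$, the hypograph when $c<0$); a short case analysis across the four sign patterns of $(\kappa_0,\dot\kappa_0)$ shows that $D>0$ (two critical points) holds uniformly on the \emph{other} component, i.e.\ on the concave side of the local evolute. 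Combined with the localization this gives $N(q')-N(q)=2$. The main obstacle I anticipate is exactly this last sign bookkeeping — verifying that the identification of the two-solution side with the concave side of $E$ is robust across the signs of $\kappa_0$ and $\dot\kappa_0$; the rest is a direct application of the Implicit Function Theorem and one Taylor expansion in the Frenet frame.
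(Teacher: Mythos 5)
Your proposal is correct in substance but follows a genuinely different route from the paper. The paper argues synthetically: it recalls that $N(q)$ can only change when $q$ crosses $E$, counts equilibria as normal lines of $r$ through the reference point, and uses the classical duality that every normal of $r$ is tangent to $E$; from a point on the concave side of a locally convex arc of $E$ there are exactly two tangent lines to that arc, from the convex side none, so the counts differ by exactly $2$. You instead run an analytic fold-bifurcation argument on $G(\tau,q)=\langle r(\tau)-q,\dot r(\tau)\rangle$: your Frenet-frame expansion, the identification of the non-cusp condition with $\dot\kappa(\tau_0)\neq 0$ and with $\partial_\tau^2 G\neq 0$, the discriminant $D=\kappa_0^2\eta^2-(2\dot\kappa_0/\kappa_0)\epsilon$, and the matching parabola $\epsilon=(\kappa_0^3/2\dot\kappa_0)\eta^2$ for $E$ are all correct, and the side bookkeeping indeed comes out uniformly (it depends only on the sign of $\dot\kappa_0/\kappa_0$): the two-root region is the nonconvex component, i.e. the concave side, as required. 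What your computation buys over the paper's proof is quantitative local information (the parabolic model of $E$ and the square-root merging of the two equilibria), which is in the spirit of the paper's later Subsection on the time evolution of critical flocks; what the paper's proof buys is brevity and no Taylor bookkeeping. One point to tighten: the sign of the \emph{truncated} discriminant does not by itself control the zero count for points of $V$ arbitrarily close to $E$, where the neglected $O(s^3)$ and mixed terms compete. The clean fix, already implicit in your setup, is to note that degenerate zeros of $G(\cdot,q)$ occur exactly when $q\in E$ (this is your localization computation applied at $\tau$ near $\tau_0$), hence the number of zeros near $\tau_0$ is constant on each component of $V\setminus E$; it then suffices to evaluate the count at one representative point per side (say $\eta=0$, $\epsilon=\pm\delta$), where the truncation with explicit error control does decide the matter. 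With that adjustment your argument is complete.
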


\begin{proof}
Without loss of generality, throughout the proof we consider only spherical neighborhoods of $E(\tau)$.
We recall the well-known fact that moving $q$ continuously, $N(q)$ changes if, and only if $q$ crosses the evolute $E$ \cite{robust, Poston}. Since for any sufficiently small neighborhood $V$ of $E(\tau_0)$, $E$ intersects $V$ in a simple curve, we have that for any $q \in V \setminus E$, $N(q)$ depends only on which side of $E$ $q$ is located. Thus, it suffices to prove that for some $q$ on the convex, and some $q'$ on the concave side of $E$, we have $N(q)+2=N(q')$.

Let  $L(\tau)$ denote the normal line of $r$ at $r(\tau)$, i.e. the line perpendicular to $\dot{r}(\tau)$ and passing through $r(\tau)$.
Note that the number of equilibria with respect to a point $z$ is the number of normal lines of $r$ passing through $z$.
Let the normal lines of $r$ passing through $E(\tau_0)$ be $L(\tau_0), L(\tau_1), \ldots, L(\tau_k)$. Let $q$ and $q'$ be points sufficiently close to $E(\tau_0)$ such that $q$ is on the convex and $q'$ is on the concave side. Then, apart from some small neighborhood of $\tau_0$, there are exactly $k$ normals (say $L(\tau_i')$, where $i=1,2,\ldots,l$) passing through $q$, and $k$ normals (say $L(\tau''_i)$, $i=1,2,\ldots,k$) passing through $q'$, where $\tau'_i$ and $\tau''_i$ are `close' to $\tau_i$. On the other hand, there are exactly two lines through $q'$ that touch $E$ near $E(\tau_0)$, and since $E(\tau_0)$ is a general point of $E$, these two lines are normal lines of $r$ at exactly two values $\tau^*_1$ and $\tau^*_2$, close to $\tau_0$. Since every normal line of $r$ is tangent to $E$, it follows that $N(q')=k+2$. Similarly, there are no lines through $q$ that touch $E$ close to $E(\tau_0)$, yielding that $N(q) = k$.
\end{proof}

\begin{rem}\label{rm:caustic1}
For any fixed, small $\Delta$, the lines, normal to a side of the approximating polygon, and passing through a vertex of the side, decompose the plane into pieces of small diameters. The union of these normals, which we may call the \emph{evolute of the polygon}, has the property that the number of the equilibria of the polygon changes if, and only if the reference point crosses this set \cite{robust}. Thus, even though imaginary equilibrium indices change continuously during a continuous motion of the reference point, the quantity $N^{\Delta}(t)$ makes rapid jumps during this motion. This phenomenon can be observed, i.e. on Figure \ref{fig:coev}, especially when the reference point approaches the evolute of the curve.
\end{rem}

\begin{rem}\label{rm:caustic2}
Let the vertices of the approximating polygon be $p_1, p_2, \ldots, p_n$ in counterclockwise order, and let us orient the two normals to the side $[p_j,p_{j+1}]$ at $p_j$ and $p_{j+1}$ such that the part of the normal through $p_i$ in the polygon points towards $p_i$, and the part of the other normal in the polygon points away from $p_{i+1}$. In this case, similarly like in Theorem~\ref{thm:caustic}, we can determine how the number of equilibria of the polygon changes if we cross its evolute: it increases if we cross an oriented line in it from right to left, and decreases in the opposite case. We note that the boundary of a cell in the cell decomposition defined by the evolute of the polygon is cyclic if, and only if the number of equilibria has a local extremum in this region.
\end{rem}


\subsection{The size of Critical Flocks}\label{ss:size}

In this subsection we examine the number of local equilibria of the one-parameter family of curves $r_t$ at some fixed time $t$, exactly when the reference point  $o(t)=o$ is on the evolute of the curve, so we will drop the subscript
of $r$. Strictly speaking, this subsection could also be regarded of the \emph{static theory} of equilibria, discussed in Section \ref{s:curves}, however, its content is  more closely related to the subject of the current section.
More specifically, we use the following notation.

Let $r:[\underline{\tau},\overline{\tau}] \to \Re^2$ be $C^{2k-1}$-class plane curve which has a unique equilibrium point $m = r(0)$ with respect to the origin $o$, where $k \geq 2$ is the order of the first nonzero derivative of the Euclidean distance function $\tau \mapsto |r(\tau)|$ at $\tau = 0$. As in Section~\ref{s:curves}, let $F^{\Delta}$ be the $n$-element equidistant partition of $[\underline{\tau},\overline{\tau}]$ with $\Delta = \frac{\overline{\tau} - \underline{\tau}}{n}$. Let $N^{\Delta}(o)$ denote the number of equilibrium points of the approximating polygon defined by $F^{\Delta}$.

\begin{thm}\label{thm:critical}
We have $N^{\Delta}(o) = \Theta\left( {\Delta}^{-\frac{k-2}{k-1}} \right)$, i.e. there are constants $C_1, C_2 \in \Re$ such that $C_1 {\Delta}^{-\frac{k-2}{k-1}}  
\leq N^{\Delta}(o) \leq C_2 {\Delta}^{-\frac{k-2}{k-1}}$ holds for all $\Delta > 0$.
\end{thm}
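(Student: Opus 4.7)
The plan is to reduce the problem to counting mesh points $\tau_i=i\Delta$ whose rescaled coordinate falls in a bounded window, and then to invoke Theorem~\ref{thm:no_irregular} to rule out equilibria outside that window. Write $D(\tau):=|r(\tau)|^2$. Using $\langle a,b\rangle=\tfrac12(|a|^2+|b|^2-|a-b|^2)$ one computes
\[
\phi(\tau_i):=\langle r(\tau_i),r(\tau_i+\Delta)-r(\tau_i)\rangle=\tfrac12[D(\tau_i+\Delta)-D(\tau_i)]-\tfrac12|r(\tau_i+\Delta)-r(\tau_i)|^2,
\]
and an analogous formula for $\phi^+(\tau_i):=\langle r(\tau_i+\Delta),r(\tau_i+\Delta)-r(\tau_i)\rangle$ with the sign of the last term reversed. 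The edge $[p_i^\Delta,p_{i+1}^\Delta]$ carries an equilibrium iff $\phi(\tau_i)\le 0\le\phi^+(\tau_i)$, equivalently
\begin{equation}\label{eq:eq-cond}
|D(\tau_i+\Delta)-D(\tau_i)|\le|r(\tau_i+\Delta)-r(\tau_i)|^2.
\end{equation}

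By the hypothesis on $k$, $d(\tau):=|r(\tau)|$ satisfies $d^{(j)}(0)=0$ for $1\le j<k$ and $d^{(k)}(0)=k!\,c\ne 0$, whence $D(\tau)=\rho^2+2\rho c\,\tau^k+O(\tau^{k+1})$. Also, with $v:=|\dot r(0)|>0$, one has $|r(\tau+\Delta)-r(\tau)|^2=v^2\Delta^2(1+O(|\tau|+\Delta))$. Substituting into \eqref{eq:eq-cond} and extracting the dominant contributions yields
\[
2\rho|c|\,\bigl|(\tau_i+\Delta)^k-\tau_i^k\bigr|(1+o(1))\le v^2\Delta^2(1+o(1)).
\]

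Rescale by $\tau_i=x_i\Delta^{1/(k-1)}$. For $|x_i|$ bounded away from $0$, the mean value theorem gives $(\tau_i+\Delta)^k-\tau_i^k=k\tau_i^{k-1}\Delta(1+O(\Delta^{(k-2)/(k-1)}))=kx_i^{k-1}\Delta^2(1+o(1))$, so the condition collapses to $|x_i|^{k-1}\le v^2/(2\rho|c|k)$ up to vanishing error. In the complementary regime $|\tau_i|=O(\Delta)$ one checks directly that $|(\tau_i+\Delta)^k-\tau_i^k|=O(\Delta^k)\le v^2\Delta^2$ for $k\ge 2$ and small $\Delta$, so \eqref{eq:eq-cond} holds automatically there. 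Fixing constants $0<C_1^*<C^*:=\bigl(v^2/(2\rho|c|k)\bigr)^{1/(k-1)}<C_2^*$, for all sufficiently small $\Delta$ every $i$ with $|x_i|\le C_1^*$ yields an equilibrium and no $i$ with $|x_i|\ge C_2^*$ does. Since the number of integers $i$ with $|i\Delta|\le C_j^*\Delta^{1/(k-1)}$ equals $2C_j^*\Delta^{-(k-2)/(k-1)}+O(1)$, this gives constants $C_1,C_2>0$ with $C_1\Delta^{-(k-2)/(k-1)}\le N^\Delta(o)\le C_2\Delta^{-(k-2)/(k-1)}$ for all small $\Delta$; for larger $\Delta$ the bounds can be adjusted by enlarging $C_2$ and shrinking $C_1$.

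To exclude equilibria far from $0$ I would invoke Theorem~\ref{thm:no_irregular} together with Remark~\ref{rem:no_irregular} (which covers degenerate $m$): the indices of equilibria of $P^\Delta$ form a single connected block, so once the analysis above confines this block to $|x_i|\le C_2^*$, all equilibria are captured by the count. The main obstacle is the uniform bookkeeping of Taylor remainders of $D(\tau_i+\Delta)-D(\tau_i)$ and of $|r(\tau_i+\Delta)-r(\tau_i)|^2$ across the two regimes $|\tau_i|\gtrsim\Delta$ and $|\tau_i|\lesssim\Delta$, and verifying that the resulting error is small enough for the \emph{same} exponent $-(k-2)/(k-1)$ to govern both bounds, in particular ensuring the leading prefactor $kx_i^{k-1}$ does not degenerate in the transition regime.
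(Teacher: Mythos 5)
Your proposal is correct and takes essentially the same route as the paper's proof: the same edge inequality characterizing equilibria, a Taylor expansion of the distance function that places the threshold index at $\Theta\left(\Delta^{-\frac{k-2}{k-1}}\right)$ (equivalently, flock radius $\Theta\left(\Delta^{\frac{1}{k-1}}\right)$ in the parameter), and an appeal to Theorem~\ref{thm:no_irregular} together with Remark~\ref{rem:no_irregular} to exclude equilibria outside that window. The only difference is presentational: you expand the squared distance $|r|^2$ in a general parametrization via the polarization identity and rescale, whereas the paper expands $\rho(\tau)$ in a polar-angle parametrization and derives a contradiction beyond the threshold; the remainder bookkeeping you flag as the main obstacle is the same elementary estimate the paper also treats briefly.
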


\begin{rem}
The proof of Theorem~\ref{thm:critical} shows a slightly stronger statement, namely that the \emph{diameter} of the flock at $m$ is of order $\Theta
\left( \Delta^{\frac{1}{k-1}}\right)$, i.e. there are constants $c_1,c_2$ such that for any division point $\tau$ of $F^{\Delta}$ with $\left| \tau \right| \leq c_1 \Delta^{\frac{1}{k-1}}$, both $r(\tau)$ and the segment $[r(\tau),r(\tau + \Delta)]$ contains equilibrium points, and if $\left| \tau \right| \geq c_2 \Delta^{\frac{1}{k-1}}$, then neither.
\end{rem}
                                              
\begin{proof}
Using the formula $\Delta = \frac{\overline{\tau} - \underline{\tau}}{n}$, we need to prove that $N^{\Delta}(o) = \Theta\left( {n}^{\frac{k-2}{k-1}} \right)$.
Let $p_i = r(i \Delta)$ for any division point $i\Delta$ of $F^{\Delta}$. Note that if $p_i$ and $p_{i+1}$ are unstable equilibrium points, then $[p_i,p_{i+1}]$ contains a stable equilibrium, and if $[p_{i-1},p_i]$ and $[p_i,p_{i+1}]$ contain stable equilibria, then $p_i$ is an unstable equilibrium.
Thus, it suffices to prove the existence of constants $C_1,C_2 > 0$ such that if $|i| \leq C_1 n^{\frac{k-2}{k-1}}$, then $[p_i, p_{i+1}]$ contains a stable equilibrium point, and if $|i| > C_2 n^{\frac{k-2}{k-1}}$, then it does not. Clearly, in the proof we may assume that $n$ is sufficiently large.

We prove Theorem~\ref{thm:critical} only under the additional assumption that $\tau$ denotes the polar angle in a suitable polar coordinate system.
We remark that in the general case, in a suitable coordinate system, the polar angle $\phi$ can be expressed as a $C^2$-class function of $\tau$ in a neighborhood of $\tau = 0$, and in a small neighborhood of $0$, we have $| \phi(\tau) - \gamma \tau | \leq \theta \tau^2$ for some suitable $\gamma, \theta > 0$. Using this inequality, our argument can be modified for a general parametrization of $r$ in a straightforward way.
Under the assumption that $\tau$ denotes polar angle, the curve can be written as $r(\tau) = (\rho(\tau) \cos \tau, \rho(\tau) \sin \tau)$, where $\rho(\tau)$ is the $C^{2k-1}$-class positive distance function $\rho(\tau) = |r(\tau)|$.

Similarly as before, we observe that there is a stable point on $[p_i,p_{i+1}]$ if, and only if
$\langle p_i,p_{i+1}-p_i \rangle < 0 < \langle p_{i+1},p_{i+1}-p_i \rangle$. In our notation, these inequalities are equivalent to
\begin{equation}\label{eq:polar}
\rho\left( (i+1) \Delta \right) - \rho\left( i \Delta \right) \cos \Delta > 0\,\, \textrm{and} \,\,
\rho\left( i \Delta \right) - \rho\left( (i+1) \Delta \right) \cos \Delta > 0 .
\end{equation}

First, by the differentiability properties of $r$, there are constants $A_k,A_{k+1},\ldots, A_{2k-2}$ and $\rho, B > 0$ such that
\begin{equation}\label{eq:taylor}
\left| \rho(\tau) - \rho -\sum_{j=k}^{2k-2} A_j \tau^j  \right| < B \tau^{2k-1}.
\end{equation}

We carry out the computations only for positive indices $i$, and assuming that $A_k < 0$; that is, assuming that for small positive values of $\tau$, $\rho(\tau)$ is a decreasing function. Under these conditions, the second inequality is satisfied for all values of $i > 0$.

Assume now that the first inequality is satisfied. Using the inequalities $1- \frac{\Delta^2}{2} \leq \cos \Delta \leq 1- \frac{\Delta^2}{2} + \frac{\Delta^4}{24}$, we obtain that then
\[
0 < \rho\left( (i+1) \Delta \right) - \rho\left( i \Delta \right) + \frac{\Delta^2}{2} \rho\left( i \Delta \right).
\]
Now, let us estimate the right-hand side from above by (\ref{eq:taylor}). Then, applying the inequalities $ s x^{s-1} \leq (x+1)^s - x^s \leq s (x+1)^{s-1}$ for all real $x > 1$ and integer $s \geq 2$, we obtain that
\[
0 < A_k k \Delta^k i^{k-1} + \sum_{j=k+1}^{2k-2} |A_j| j \Delta^j i^{j-1} + 3B (i+1)^{2k-1} \Delta^{2k-1} +
\frac{\Delta^2 \rho}{2} +  \frac{\Delta^2 }{2} \sum_{j=k}^{2k-2} A_j \left( i \Delta \right)^j,
\]
where $\rho = \rho(0) = |m|$.
 Let $i = C_2 {\Delta}^{-\frac{k-2}{k-1}}$, where $C_2$ will be chosen later. Then an elementary consideration shows that the largest member of the above expression, in terms of $n$, is of order $\Delta^2$, and  by the inequality $A_k < 0$, we have that if $\Delta$ is sufficiently small, then
$A_k k C_2^{k-1} + \frac{\rho}{2} > 0.$ Clearly, for suitably chosen values of $C_2$ this is a  contradiction, which, by Theorem~\ref{thm:no_irregular} and Remark~\ref{rem:no_irregular}, proves that for any $i > C_2 {\Delta}^{-\frac{k-2}{k-1}}$, the segment $[p_i,p_{i+1}]$ contains no equilibrium point.

To prove the existence of a value $C_1 > 0$ such that the first inequality in (\ref{eq:polar}) is satisfied for all $0 < i <  C_1 n^{\frac{k-2}{k-1}}$, we may apply a similar consideration.
\end{proof}

\begin{rem}
We observe that our result implies that if $m$ is a nondegenerate equilibrium point (i.e. $k=2$), then there are constants $C_1,C_2 >0$ such that $C_1 <  N^{\Delta}(o) < C_2$ for all values of $\Delta$. A stronger version of this result was proven in \cite{Monatshefte}.
\end{rem}

\subsection{Time evolution of Critical Flocks}\label{ss:time}

In Theorem~\ref{thm:caustic} we have seen that if the reference point transversely crosses the evolute of a curve at a generic point $E(\tau)$, then the number $N^0$ of local equilibria tends to infinity 
 if the reference point approaches $E(\tau)$ from its convex side (annihilation) and remains constant if the reference point approaches $E(\tau)$ from it concave side (creation).
In this subsection we explore more thoroughly these limits, and also cases when the point $E(\tau)$ is degenerate.
Note that if $E(\tau)$ is a generic point then the derivative of the curvature of $r$ corresponding to this point is not zero.
If $E(\tau)$ is a cusp, we assume that it is a \emph{generic cusp}, i.e. that  here the second derivative of the curvature of $r$ is not zero.

\begin{thm}
Let $r$ be a $C^4$-class plane curve with a unique, degenerate equilibrium point $m$ with respect to the origin $o$. Let $ o(t) = (\mu t, \nu t)$, where $t \in [-\varepsilon,\varepsilon]$ for some small value of $\varepsilon > 0$. Let $N^0(t)$ denote the number of local equilibria with respect to $o(t)$.
Then the following holds.
\begin{itemize}
\item[(i)] If the curve $o(t)$ crosses the evolute of $r$ at $o$, and the evolute $E$ is locally convex at $o$ such that for $t > 0$, $o(t)$ is on the concave side of $o$, then $N^0(t) = 0$ as $t \to 0-0$, and $N^0(t) = \Theta\left( \frac{1}{\sqrt{t}} \right)$ as $t \to 0+0$.
\item[(ii)] If $o(t)$ is tangent to the evolute of $r$ at $o$, and $E$ is locally convex at $o$, then $N^0(t) = \Theta\left( \frac{1}{t} \right)$ as $t \to 0$.
\item[(iii)] If $o(t)$ is not tangent to $E$ at the general cusp $o$, then $N^0(t) = \Theta\left( \frac{1}{\sqrt[3]{t^2}} \right)$ as $t \to 0$.
\item[(iv)] If $o(t)$ is tangent to $E$ at the general cusp $o$, then $N^0(t) = \Theta\left( \frac{1}{t} \right)$ as $t \to 0$.
\end{itemize}
\end{thm}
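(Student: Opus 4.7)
The plan is to work throughout with the squared distance $g_t(\tau):=|r(\tau)-o(t)|^2$, whose critical points in $\tau$ coincide with the equilibria of $r$ with respect to $o(t)$. A direct computation gives $g_t''(\tau_i(t))=2|\dot r(\tau_i)|^2(1+\kappa(\tau_i)\rho_t(\tau_i))$ at any critical point (with the appropriate sign convention), so by formula~(\ref{mainresult2D}) of Theorem~1 of~\cite{Monatshefte} the imaginary equilibrium index at a nondegenerate $\tau_i(t)$ is comparable to $1/|g_t''(\tau_i(t))|$. Since $m=r(\tau_0)$ is the unique equilibrium of $r$ with respect to $o$, for all small $|t|$ every equilibrium of $r$ with respect to $o(t)$ lies close to $\tau_0$; hence the whole problem reduces to locating the critical points of $g_t$ in a shrinking neighborhood of $\tau_0$ and estimating $g_t''$ at each, with Theorem~\ref{thm:no_irregular} ensuring the sum of imaginary indices over these produces $N^0(t)$ in the $\Delta\to 0$ limit.

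Because $o(t)=t\,o'(0)$ is linear in $t$, the identity $g_t'(\tau)=g_0'(\tau)+t\,h'(\tau)$ is \emph{exact}, with $h(\tau):=-2\langle r(\tau)-o,o'(0)\rangle$ and $h'(\tau_0)=-2\langle\dot r(\tau_0),o'(0)\rangle$. The degeneracy of $m$ forces $g_0'(\tau_0)=g_0''(\tau_0)=0$; a short computation in arc-length parametrization gives $g_0'''(\tau_0)=-2\dot\kappa(\tau_0)/\kappa(\tau_0)$, nonzero precisely when $E(\tau_0)$ is not a cusp, and the generic-cusp hypothesis forces $g_0^{(4)}(\tau_0)\neq 0$. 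Since the tangent to $E$ at $E(\tau_0)$ is along the normal to $r$ at $\tau_0$, tangency of $o(t)$ to $E$ at $o$ is equivalent to $o'(0)\perp\dot r(\tau_0)$, i.e.\ to $h'(\tau_0)=0$.

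I would then solve $g_0'(\tau)+t\,h'(\tau)=0$ by a Newton-polygon blowup $\tau-\tau_0=t^a\sigma$ in each case, with $a$ read off from the balance of leading Taylor terms. Case~(i), $a=1/2$: the leading polynomial $3A\sigma^2+h'(\tau_0)=0$ with $A=g_0'''(\tau_0)/6$ has two simple real roots for one sign of $t$ and none for the other; $|g_t''(\tau_i)|\sim\sqrt{|t|}$, so $N^0(t)=\Theta(1/\sqrt{t})$ on the concave side and $=0$ on the convex side. Case~(ii), $a=1$: since $h'(\tau_0)=0$, the point $\tau_0$ is itself a critical point of $g_t$ for every $t$ with $g_t''(\tau_0)=t\,h''(\tau_0)$; the leading quadratic $\sigma(3A\sigma+h''(\tau_0))=0$ yields one further simple root at $\tau-\tau_0\sim t$, also with $|g_t''|\sim|t|$, so $N^0(t)=\Theta(1/t)$. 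Case~(iii), $a=1/3$: the leading cubic $4A\sigma^3+h'(\tau_0)=0$ with $A=g_0^{(4)}(\tau_0)/24$ has exactly one simple real root, $|g_t''(\tau_i)|\sim t^{2/3}$, so $N^0(t)=\Theta(1/t^{2/3})$. Case~(iv), $a=1/2$: $\tau_0$ is again a critical point for every $t$, and the leading equation $\sigma(4A\sigma^2+h''(\tau_0))=0$ produces two further simple roots $\sigma=\pm\sqrt{-h''(\tau_0)/(4A)}$ when the sign of $t$ makes the radicand positive; all critical points have $|g_t''|\sim|t|$, so $N^0(t)=\Theta(1/t)$.

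The main obstacle will be rigorously passing from these \emph{leading-order} polynomial equations to two-sided estimates on the actual critical-point positions and on $g_t''(\tau_i(t))$. The needed ingredients are $C^4$-regularity of $r$, producing Taylor remainders of the required order; linearity of $o(t)$, which makes the identity $g_t'=g_0'+t\,h'$ exact (so no hidden $t^2$-contributions disturb the balances); and simplicity of the roots of the leading polynomial in each case, which allows a standard rescaling / implicit function theorem argument to produce smooth branches $\sigma_j(t^a)$ of the actual critical points together with the claimed asymptotics on $g_t''$. A secondary care-point is cases~(ii) and~(iv), where the persistent critical point at $\tau=\tau_0$ must be counted alongside the other branches; in both cases it contributes the same order $1/|t|$ to $N^0$, consistent with the claimed $\Theta$-bound.
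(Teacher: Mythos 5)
Your plan is correct and reaches the right asymptotics in all four cases, but it organizes the computation differently from the paper. The paper works in graph coordinates $r(x)=(x,f(x))$ and exploits the linearity of $o(t)$ in the other direction: for each point $x$ it solves the equilibrium condition $\langle r'(x),r(x)-o(t)\rangle=0$ for the \emph{unique} time $t(x)=\frac{x+f'(x)f(x)}{\mu+\nu f'(x)}$, and then Taylor-expands both $t(x)$ and $F(x)=1+\rho(x,t(x))\kappa(x)$ in powers of $x$; the four cases are $\mu\neq 0$ versus $\mu=0$ (your $h'(\tau_0)\neq 0$ versus $h'(\tau_0)=0$) crossed with $\kappa'(0)\neq 0$ versus $\kappa'(0)=0,\ \kappa''(0)\neq 0$ (your $g_0'''(\tau_0)\neq 0$ versus $g_0'''(\tau_0)=0,\ g_0^{(4)}(\tau_0)\neq 0$), and your conversion of the imaginary index into $\asymp 1/|g_t''|$ is exactly the paper's $2/|1+\rho\kappa|$, since $g_t''=2|\dot r|^2(1+\rho\kappa)$ at a critical point. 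The advantage of the paper's inversion is that the branch count at a given time $t$ is read off directly from the local shape of the single-valued function $t(x)$ (monotone, quadratic-like, or cubic-like), so no scaling or implicit-function argument is needed and spurious critical points cannot arise; the degenerate persistent equilibrium ($\mu=0$, $x=0$) is handled separately, just as you handle $\tau_0$ in cases (ii) and (iv). Your Newton-polygon blow-up of $g_0'+th'=0$ buys a more systematic derivation of the exponents, but it carries exactly the burdens you flag: you must rule out critical points of $g_t$ between the blow-up scale and a fixed neighborhood of $\tau_0$ (needed for the upper bound in each $\Theta$; straightforward since $|g_0'(\tau)|\gtrsim|\tau-\tau_0|^2$ or $|\tau-\tau_0|^3$ there while $|th'(\tau)|$ is of lower order), you should note that $h''(\tau_0)=-2\kappa(\tau_0)\langle n(\tau_0),o'(0)\rangle\neq 0$ in the tangent cases (automatic since $\kappa(\tau_0)\rho=-1$ forces $\kappa(\tau_0)\neq 0$ and $o'(0)\parallel n(\tau_0)$), and in case (i) the identification of the sign of $t$ carrying the two roots with the concave side of $E$ should be made explicit (it follows from Lemma~\ref{lem:caustic}). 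With those points filled in, your argument, combined with the reduction of $N^0$ to the sum of imaginary indices via Theorem~1 of \cite{Monatshefte} and Theorem~\ref{thm:no_irregular}, is a complete and valid alternative to the paper's proof.
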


\begin{proof}
Note that since the number of local equilibria is independent of the parametrization of $r$, we may assume that $r$ is given in the form $r(x) = (x,f(x))$ for some $C^4$-class function $f$ and for $x \in [-a,a]$. Let $\rho(x,t)=|r(x) - o(t)|$, and let $\kappa(x)$ denote the signed curvature of $r$ at the point $r(x)$. Let $X(t)$ denote the set of values of $x$ such that $r(x)$ is an equilibrium point with respect to $o(t)$.
Then $N^0(t)= \sum_{x \in X(t)} \frac{1+|\rho(x,t) \kappa(x)|}{|1+\rho(x,t) \kappa(x)|}$.
Note that since $m=r(0)$ is a degenerate equilibrium point with respect to $o$, we have $\kappa(0) = - \frac{1}{\rho(0,0)} < 0$, and thus, we may restrict our investigation to a neighborhood of $x=0$ where $\kappa(x)$ (and so $f''(x)$) is negative. From this an elementary computation yields that
$N^0(t) = \sum_{x \in X(t)} \frac{2}{|1+\rho(x,t) \kappa(x)|}$.

For any value of $x$, let us define the function $t(x)$ by the condition that $r(x)$ is an equilibrium point with respect to $o(t)$.
Note that this condition is equivalent to saying that $\langle r'(x), r(x)-o(t)\rangle = 0$.
Observe that if the line $L = \{ o(t) : t \in \Re \}$ is not perpendicular to the vector $r'(x)$, then this equation has a unique solution for $t$.
It is an elementary computation  to show that the condition that $L$ is not perpendicular to $r'(x)$ is equivalent to the condition that $\mu + \nu f'(x) \neq 0$. Under this condition, the unique solution $t$ can be expressed as $t(x) = \frac{x+f'(x)f(x)}{\mu + \nu f'(x)}$.
Note that if $\mu+\nu f'(x) = 0$ for some $x$ arbitrarily close to $x=0$ (but not equal to zero) and $\nu \neq 0$, then it would contradict our assumption that $f'(x)$ is differentiable at $x=0$, and $f''(0) \neq 0$. If $\nu = 0$, then $\mu \neq 0$, and thus, $\mu+\nu f'(x) \neq 0$ unless $f'(x) = 0$. Thus, we may assume that $t(x)$ uniquely exists unless $\mu = 0$ and $x=0$.

First, we assume that $\mu \neq 0$, or $\mu = 0$ and $x \neq 0$.
Let us define the function $F(x) = 1 + \rho(x,t(x)) \kappa(x)$. We examine the first nonvanishing term of this function as a function of $x$. Let $\rho_0 = f(0) > 0$.
Note that as $f$ is a $C^4$-class function, there is some number $A \in \Re$, and continuous functions $B,C,D$ such that
$f(x) = \rho_0 - \frac{1}{2 \rho_0}x^2+Ax^3+B(x) x^4$, $f'(x) = - \frac{1}{\rho_0}x+3Ax^2+C(x) x^3$, $f''(x) = -\frac{1}{\rho_0} + 6Ax+D(x) x^2$.
Here, it is easy to see that whereas $B,C,D$ may be different functions, we have $f^{(4)}(0) = 2D(0) = 6C(0) = 24B(0)$.

On the other hand, by the formula $\kappa(x) = \frac{f''(x)}{\left( 1+f'^2(x)\right)^{\frac{3}{2}}}$ and an elementary computation, we obtain that $\kappa'(x) \neq 0$ is equivalent to $A \neq 0$, and $\kappa'(0)=0$ and $\kappa''(x) \neq 0$ is equivalent to $A=0$ and $f^{(4)}(0) \neq - \frac{3}{\rho_0^3}$.
Note that $F(x) \to 0$ as $x \to 0$.
An elementary algebraic transformation shows that
\[
F(x) = \frac{(1+f'^2(x))^3 - f''^2(x) \left( (x-\mu t(x))^2 + (f(x) - \nu t(x))^2\right)}{(1+f'^2(x))^3 \left( 1 - \kappa(x) \rho(x,t(x))\right)}.
\]

Consider the case that $\mu \neq 0$. If $A \neq 0$, then $F(x) = 6 \rho_0 A x + G(x) x^2$ and $t(x) = \frac{3A}{\mu}x^2 + H(x) x^3$ for some continuous functions $G, H$. If $A= 0$ and $f^{(4)}(0) \neq - \frac{3}{\rho_0^3}$, then $F(x) = \frac{\rho_0^4 \left( 2 D(x) \rho_0^3 + 3\right)}{2} x^2 + G(x) x^3$ and $t(x) = \frac{1}{2 \mu \rho_0^2} x^3 + H(x) x^4$ for some continuous functions $G, H$. Here we note that $D(0) \rho_0^3 + 3 = f^{(4)}(0) \rho^3 + 3 \neq 0$.

If $\mu = 0$ and $A \neq 0$, then $F(x) = 3 \rho_0 A x + G(x) x^2$ and $t(x) = -\frac{3A\rho_0^2}{\nu} x + H(x) x^2$ for some continuous $G$ and $H$. If $\mu = 0$, $A = 0$ and $f^{(4)}(0) \neq - \frac{3}{\rho_0^3}$, then $F(x) = \left( C(x) - D(x) - \frac{1}{\rho_0^3} \right) x^2 + G(x) x^3$, and $t(x) = - \frac{1}{2 \rho_0 \nu}x^2 + H(x) x^3$ for some continuous $G, H$. Note that $C(0) - D(0) = - \frac{f^{(4)}(0)}{3} \neq \frac{1}{\rho^3}$.

Finally, in the degenerate case, when $x = 0$ and $\mu = 0$, there is an equilibrium point for every value of $t$, and thus,  $|1 + \kappa(0) \rho(0,t)| = \frac{|\nu|}{\rho_0} |t|$.

To finish the proof one needs only to collect the number of equilibria in each case, and express all magnitudes in terms of $t$.
\end{proof}

\subsection{A numerical example for curvature-driven flow demonstrating the singular limit for local equilibria}\label{ss:num}
Here we demonstrate the results of the previous two subsections, in particular the size of the critical flock and Theorem \ref{thm:caustic} in the case where the curve is evolving under the curve shortening flow \cite{Grayson}.  In compact notation this flow may be written for a convex, embedded curve as
\begin{equation}\label{curv}
v=c\kappa,
\end{equation}
where $v$ is the speed in the direction of the inward normal and $\kappa$ is the curvature and $c$ is a scalar coefficient.
 This evolution is one of the most interesting evolutions from the point of view of geophysics since we know \cite{Grayson} that it only generates annihilations for $N(t)$ so, based on Theorem \ref{thm:caustic} we expect to see only $C$-type events in the evolution of $N^{\Delta}(t)$. In order to integrate equation (\ref{curv}), below
we describe a numerical scheme which produces in each time step an equidistant discretization with respect to the arc-length of the curve. $N^{\Delta}(t)$ is computed on the polygon determined by the vertices of that discretization. $N(t)$ is simply the number of extrema of the piecewise linear function determined by the vertex distances from the centroid in polar coordinates.

Let $I=[0,1]$; a smooth, non-intersecting curve with a natural parametrization and unit perimeter is denoted by $r(\tau): I\rightarrow\Re^2$. Let $(\cdot)'$ and $\dot{(\cdot)}$ stand for derivation with respect to the parameter of the curve and time, respectively. Then the curvature of the curve is simply $\kappa(\tau)=\left\|r(\tau)''\right\|$. The unit normal to the curve is $n(\tau)=r''(\tau)\kappa(\tau)^{-1}$. In case of the curve shortening flow (\ref{curv}), during time $dt$, the curve $r(\tau)$ is mapped to $\tilde{r}(\tau)$ via
\begin{equation}
\tilde{r}(\tau)=r(\tau)+c\kappa(\tau)n(\tau)dt+\mathcal{O}(dt)=r(\tau)+cr''(\tau)dt+\mathcal{O}(dt),
\end{equation}
\noindent Note that $\tilde{r}(\tau)$ is parametrized with respect to the arc length of $r(\tau)$, hence its parametrization is not natural. We aim to describe the curve by the tangent direction $\alpha(\tau)$. Hence $r'(\tau)=:e(\alpha(\tau))=[\sin(\alpha(\tau)),\cos(\alpha(\tau)]$. By keeping the linear terms in $dt$, for $\tilde{r}(\tau)$ we obtain $\left\|\tilde{r}'(\tau)\right\|=1-c\alpha'(\tau)dt$. Obviously
\begin{equation}
\label{eq:tangent:evol}
e(\alpha(\tau)+\dot{\alpha}(\tau)dt)=\frac{\tilde{r}'(\tau)}{\left\|\tilde{r}'(\tau)\right\|}
\end{equation}
\noindent holds for the mapped curve.  Taylor expansion of the left hand side of Equation (\ref{eq:tangent:evol}), substitution of $e(\alpha(\tau))$ into the derivative of $\tilde{r}'(\tau)$, the chain rule and the Frenet-formulas yield
\begin{equation}
e(\alpha(\tau))+e'(\alpha(\tau))\dot{\alpha}(\tau)dt+\mathcal{O}(dt)=\frac{e(\alpha(\tau))-ce(\alpha(\tau))\alpha'(\tau)dt+ce'(\alpha(\tau))\alpha''(\tau)dt}{1-c\alpha'(\tau)dt}.
\end{equation}

\noindent Algebraic manipulations and neglecting the $\mathcal{O}(dt)$ terms leaves
\begin{equation}
\dot{\alpha}(\tau)=c\alpha''(\tau).
\end{equation}

This simple, linear PDE can easily be simulated by a finite difference scheme for the spatial, and an Euler scheme for the time derivatives, respectively. However, since $\tau$ is not a natural parameter for $\tilde{r}(\tau)$, in each time-step the curve must be reparametrized to obtain $\tilde{r}(\tilde{\tau})$, a curve with a natural parametrization $\tilde{\tau}$.

\begin{figure}
\includegraphics[width=0.95\textwidth]{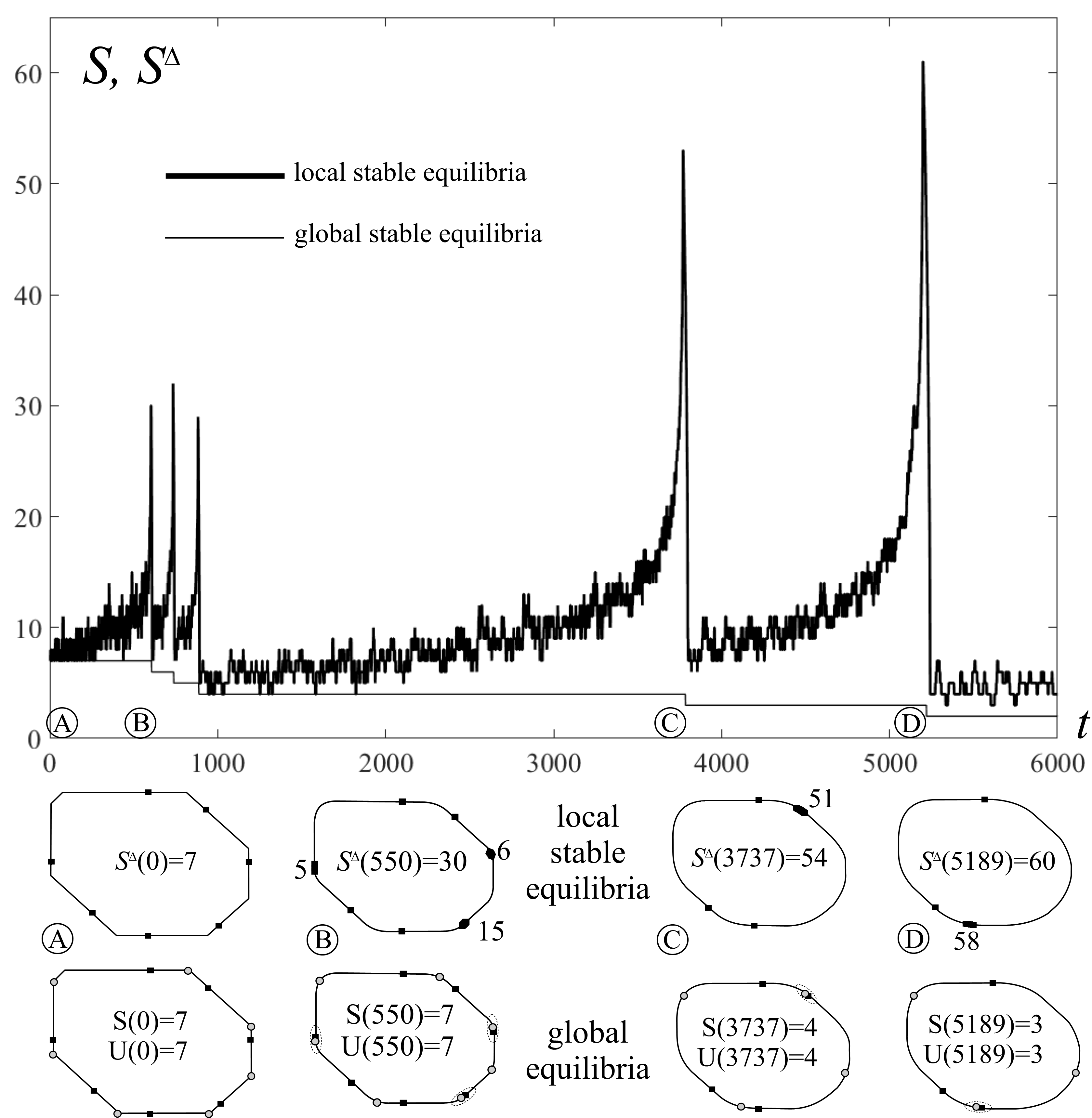}
\caption[]{Co-evolution of local and global equilibria in 2D under the curve-shortening flow (\ref{curv}). A polygon with $7$ stable and $7$  unstable balance points is  evolved in such a manner that the perimeter of the shape is kept unit. The simulation is carried out until $2$ stable and $2$ unstable (global) equilibria remain.}
\label{fig:coev}
\end{figure}

Figure \ref{fig:coev} shows the co-evolution of the local and global equilibria from a generic polygonal shape under the curve shortening flow. Observe the five type A events during the evolution.

\section{Smooth surfaces and their discretizations}\label{sec:3D}

\subsection{Earlier results}\label{ss:3Dmonatshefte}

Let $D = [\underline{u},\overline{u}] \times [\underline{v},\overline{v}]$, and let $r:  \to \Re^3$ be a convex, $C^3$-class surface having a unique, non-degenerate equilibrium point at $m=r(0,0)$ with respect to the origin $o$. Let $F^{\Delta}$ denote the $n \times n$ equidistant partition of $D$, where $\Delta = \left( \Delta_1, \Delta_2 \right) = \left( \frac{\overline{u}-\underline{u}}{n}, \frac{\overline{v}-\underline{v}}{n}\right)$. If $\left( i \Delta_1, j \Delta_2 \right)$ is a division point of $F^{\Delta}$, we call $(i,j)$ the \emph{indices} of the point $p^{\Delta}_{i,j}=r \left( i \Delta_1, j \Delta_2 \right)$. Similarly like in the planar case in Section~\ref{s:curves}, we note that in general, $i$ and $j$ are not integers but real numbers congruent $\mod 1$.

We define the polyhedral surface $P^{\Delta}$ in the following way:
\begin{itemize}
\item The vertices of $P^{\Delta}$ are exactly the points $p^{\Delta}_{i,j}$.
\item For any $i,j$, if the segment $[p^{\Delta}_{i,j},p^{\Delta}_{i+1,j+1}]$ is an edge of $\conv \{ o, p^{\Delta}_{i,j},p^{\Delta}_{i+1,j} ,p^{\Delta}_{i+1,j+1}, p^{\Delta}_{i,j+1}\}$, then the triangles 
$\conv \{ p^{\Delta}_{i,j},p^{\Delta}_{i+1,j} ,p^{\Delta}_{i+1,j+1} \}$ and $\conv \{ p^{\Delta}_{i,j},p^{\Delta}_{i,j+1} ,p^{\Delta}_{i+1,j+1} \}$, and otherwise the triangles
$\conv \{ p^{\Delta}_{i,j},p^{\Delta}_{i+1,j} ,p^{\Delta}_{i,j+1} \}$ and $\conv \{ p^{\Delta}_{i+1,j+1},p^{\Delta}_{i,j+1} ,p^{\Delta}_{i+1,j} \}$ are faces of $P^{\Delta}$.
\end{itemize}
Then $P^{\Delta}$ is a triangulated surface defined by the partition $F^{\Delta}$ of $D$.

If a face/edge of $P^{\Delta}$ contains an equilibrium point with respect to $o$, and $i,j$ are the minimum of the indices of the vertices of this face/edge, then we say that the indices of the equilibrium points are $(i,j)$. For any $K > 0$, we denote the number of stable/unstable/saddle points of $P^n$, whose indices satisfy the inequalities $|i|, |j| \leq K$, by $S^{\Delta}(K), U^{\Delta}(K)$ and $H^{\Delta}(K)$, respectively.

In \cite[Theorem 2]{Monatshefte}, the authors proved that if $K$ is sufficiently large and $\Delta$ is sufficiently small, then the quantities $S^{\Delta}(K), U^{\Delta}(K)$ and $H^{\Delta}(K)$ fluctuate around specific values $S^0(K)$, $U^0(K)$ and $H^0(K)$, whose values are
\begin{equation} \label{mainresult}
S^0(K)=d, \hspace{1cm} U^0(K)=\kappa_{1}\kappa_{2}\rho^2d, \hspace{1cm}  H^0(K)=-(\kappa_{1}+\kappa_{2})\rho d
\end{equation}
where $d=\frac{1}{|(\kappa_{1}\rho+1)(\kappa_{2}\rho+1)|}$, $\rho = |m|$, and $\kappa_1,\kappa_2 \leq 0$ are the (signed) principal curvatures of $r$ at $m$.

\subsection{Enumeration of global equilibria on finely discretized surfaces} \label{ss:global_3d}

The aim of this subsection is to find a $3$-dimensional analogue of Remark~\ref{rem:2D}; namely, given a fine discretization of a smooth surface, we intend to find the number of global equilibrium points of the surface.

Before stating our results, we need some preparation. We note that, for the sake of simplicity, instead of $\Delta$, to do this we describe an equidistant partition by the number  $n$ of intervals in it, instead of the size $\Delta$ of these intervals.
Let $f : [0,a] \times [0,b] \to \Re$ be a $C^3$-class function.
Consider a partition $F_n$ of the rectangle $D=[0,a] \times [0,b]$ into $n \times n$ congruent rectangles.
We call the vertices of these rectangles \emph{grid vertices}, and denote the grid vertex $\left( \frac{i}{n}a, \frac{j}{n}b \right)$ by $p_{i,j}$.
The \emph{neighbors} of the grid vertex $p_{i,j}$ are the four grid vertices $p_{i \pm 1,j}$ and $p_{i,j \pm 1}$.
The two pairs $p_{i \pm 1,j}$ and $p_{i,j \pm 1}$ are called \emph{opposite neighbors} of $p_{i,j}$.
A grid vertex $p$ is \emph{stationary}, if for any opposite pair $\{ q, q' \}$ of its neighbors, $f(p) \geq \max \{ f(q), f(q') \}$ or
$f(p) \leq \min \{ f(q), f(q') \}$ is satisfied.
If $p_{i,j}$ is a grid vertex, then the \emph{grid circle of center $p_{i,j}$ and radius $r$} is the set
\[
C_r(p_{i,j}) = \{ p_{l,m} : \max \{ |l-i|, |m-j| \} \leq r \}.
\]

During the consideration, we assume that $f$ has finitely many stationary points, each in the interior of the domain $D$, and the determinant of the Hessian of $f$ at each of them is nonzero. We assume that the grids we use are non-degenerate; more specifically, that if $p \neq p'$ are two grid vertices, then $f(p) \neq f(p')$.


\begin{thm}\label{thm:auxiliary}
Let $p=(x_0,y_0) \in \inter D$.
\begin{enumerate}
\item[(1)] If $p$ is \emph{not} a stationary point of $f$, then $p$ has a neighborhood $U \subset D$ such that for any $n \geq 1$, if $p_{i,j}$ and all its neighbors are contained in $U$, then $p_{i,j}$ is not a stationary grid vertex.
\item[(2)] If $p$ is a local minimum of $f$, then $p$ has a neighborhood $U$ and some sufficiently large value of $r$ such that for every sufficiently large $n$, there is exactly one grid vertex $p_{i,j}$ in $U$, which is minimal within its grid circle $C_r(p_{i,j})$.
\item[(3)] If $p$ is a local maximum of $f$, then $p$ has a neighborhood $U$ and some sufficiently large value of $r$ such that for every sufficiently large $n$, there is exactly one grid vertex $p_{i,j}$ in $U$, which is maximal within its grid circle $C_r(p_{i,j})$.
\item[(4)] If $p$ is a saddle point of $f$, then every neighborhood of $p$ contains a stationary grid vertex, and $p$ has a neighborhood $U$ and some sufficiently large value of $r$ such that for every sufficiently large $n$, any grid vertex $p_{i,j}$ in $U$ is neither maximal, nor minimal within its grid circle $C_r(p_{i,j})$.
\end{enumerate}
\end{thm}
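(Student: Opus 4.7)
The plan is to prove the four parts of Theorem~\ref{thm:auxiliary} in order, combining Taylor expansion of $f$ at grid vertices with the non-degeneracy hypotheses on $f$ and on the grid.

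For (1), since $p$ is not stationary I have $\nabla f(p)\neq 0$; without loss of generality $\partial_x f(p)\neq 0$, and by continuity there is a neighborhood $U$ of $p$ on which $\partial_x f$ keeps a definite sign. For any grid vertex $p_{i,j}\in U$ whose $x$-neighbors $p_{i\pm 1,j}$ lie in $U$, the function $f$ is strictly monotonic along the horizontal segment from $p_{i-1,j}$ to $p_{i+1,j}$, so $f(p_{i,j})$ is strictly between $f(p_{i-1,j})$ and $f(p_{i+1,j})$. This simultaneously violates both disjuncts of the $x$-opposite-pair condition, so $p_{i,j}$ is not stationary.

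For (2), with (3) following by a symmetric argument, I would use the positive-definite Hessian $H$ at the local minimum $p$ to localize any candidate minimal grid vertex. If $q$ is minimal in $C_r(q)$ with $r\geq 1$, then Taylor expansion in each of the four grid directions of length $O(1/n)$ forces $|\nabla f(q)|\leq C/n$, and the non-degenerate lower bound $|\nabla f(q)|\geq c|q-p|$ near $p$ yields $|q-p|\leq C'/n$. Hence any minimal grid vertex in a fixed small neighborhood $U$ of $p$ lies in a microscopic region $B_{C'/n}(p)$ containing only $O(1)$ grid vertices. By the grid non-degeneracy hypothesis the minimum of $f$ among these is attained at a unique vertex $q^\star$. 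Choosing $r$ large but fixed so that $C_r(q^\star)\supset B_{C'/n}(p)$ for all large $n$, the vertex $q^\star$ is minimal in $C_r(q^\star)$; conversely any other $q'\in B_{C'/n}(p)$ satisfies $q^\star\in C_r(q')$ and $f(q^\star)<f(q')$, so $q'$ is not minimal in its circle. This yields the required uniqueness.

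For (4), the non-extremality claim is analogous to (1) but uses the indefiniteness of $H$ at the saddle: there are unit vectors $v_\pm$ with $v_+^T H v_+>0>v_-^T H v_-$, so in a small neighborhood of $p$ the function $f$ increases by $\Omega(t^2)$ along $\pm v_+$ and decreases by $\Omega(t^2)$ along $\pm v_-$. For fixed large $r$ and large $n$, the grid circle $C_r(q)$ of any $q\in U$ contains grid vertices approximating each of the four signed directions $\pm v_\pm$, and the resulting $f$-values straddle $f(q)$, ruling out both maximality and minimality. The existence of a stationary grid vertex in every neighborhood of $p$ is the most delicate part of the proof: on each row through the neighborhood the restriction of $f$ has a unique discrete local extremum at some index $i^\star(j)$, and on each column a unique discrete local extremum at $j^\star(i)$, so a vertex with $i=i^\star(j)$ and $j=j^\star(i)$ is stationary. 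Approximating the two discrete critical sequences by the continuous loci $\{\partial_x f=0\}$ and $\{\partial_y f=0\}$, which meet transversely at $p$, one then seeks a discrete intersection argument producing a common grid vertex. The main obstacle is to execute this step rigorously: controlling the error between the discrete and continuous critical loci, handling the non-alignment of the grid with the eigenframe of $H$, and verifying that the two discrete curves actually meet in a grid vertex rather than narrowly pass each other between adjacent rows or columns.
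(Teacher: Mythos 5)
Your part (1) is correct and is essentially the paper's argument (the paper dresses it up with a cone of near-critical directions, but the content is the same: choose a coordinate direction along which the directional derivative keeps a fixed sign near $p$, so $f$ is strictly monotone along the three collinear grid vertices and the corresponding opposite pair already violates stationarity). For (2)--(3) you take a genuinely different route: the paper compares the level sets of $f$ with the ellipses of the second-order Taylor polynomial and uses Blaschke's rolling ball theorem to manufacture, inside the grid circle of any putative second circle-minimal vertex, a grid vertex of strictly smaller value, with an explicit admissible $r$ in terms of the eigenvalue ratio and the grid anisotropy; you instead localize through the gradient (circle-minimality forces $|\nabla f(q)|\le C/n$ via the four nearest neighbors, hence $|q-p|\le C'/n$ by non-degeneracy) and then compare all candidates with a vertex of smallest $f$-value. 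This is simpler and it does work, but one step is unjustified as written: defining $q^\star$ as the minimizer over the grid vertices of $B_{C'/n}(p)$ does not by itself make $q^\star$ minimal within $C_r(q^\star)$, since that grid circle has Euclidean radius of order $r\sqrt{a^2+b^2}/n$ and so contains vertices outside $B_{C'/n}(p)$ whose values your definition does not control. The fix is routine and should be stated: either take $q^\star$ to be the minimizer over \emph{all} grid vertices in the fixed neighborhood $U$ (then $f(q^\star)\le f(p)+O(1/n^2)$ and quadratic growth localize $q^\star$ near $p$, and minimality within $C_r(q^\star)\subset U$ is immediate), or enlarge $C'$ so that the quadratic lower bound $f(w)-f(p)\ge c\,|w-p|^2$ outside $B_{C'/n}(p)$ beats the $O(1/n^2)$ value at the grid vertex nearest $p$.

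In part (4) the non-extremality half is acceptable in spirit, but note that for a base vertex $q\neq p$ the linear term $t\,\langle\nabla f(q),v_\pm\rangle$ can dominate $t^2$ at the grid scale, so $f$ need not increase from $q$ along \emph{both} $+v_+$ and $-v_+$; what is true, and suffices, is that at least one of $\pm v_+$ yields an increase and at least one of $\pm v_-$ a decrease of order $t^2$ plus a one-signed linear term --- this is exactly the uniform angular-domain statement the paper uses, and your grid-circle-direction approximation then goes through for $r$ large compared with the reciprocal of the cone angle. The genuine gap is the first assertion of (4): you leave the existence of a stationary grid vertex unproven, and the sketch you offer fails as stated, because the restriction of $f$ to a grid row need not have any interior discrete extremum near a saddle --- for $f(x,y)=xy$ every row $y=y_0\neq 0$ is strictly monotone --- so the maps $i^\star(j)$ and $j^\star(i)$ on which your intersection argument rests may not exist when a grid direction is close to an asymptotic direction of the saddle; any correct argument must handle this non-aligned case (e.g.\ by a discrete index or crossing argument along the level set through the saddle). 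It is only fair to add that the paper's own proof of (4) likewise establishes only the non-extremality half, which is all that is used in Theorem~\ref{thm:main}; but since the statement you set out to prove includes the existence claim, your proposal is incomplete at precisely the point you flagged.
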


\begin{proof}
First, we prove (1).
Let $L$ be the line through the origin and perpendicular to $\grad f(p)$, and note that the derivative of $f$  at $p$ is zero in this direction.
Let $\varepsilon > 0$ be sufficiently small, and $A$ be the union of the lines, through $p$, the angles of which with $L$ is not greater than $\varepsilon$.
Note that by the continuity of $\grad f$, $p$ has a neighborhood $U$ such that for any $q \in U$, $\grad f (q)$ is perpendicular to some line in $A$.
This implies that if $q \in U$, and $A$ contains no line parallel to the vector $u$, then $f'_u(q) \neq 0$. Without loss of generality, we may assume that $U$
is a Euclidean disk in $\Re^2$.

Now, consider any division $F_n$, and assume that the grid vertex $p_{i,j}$ and all its neighbors are contained in $U$.
Since $\varepsilon > 0$ is sufficiently small, the $x$-axis or the $y$-axis is not parallel to any line in $A$.
Without loss of generality, let the $x$-axis have this property.
We show that the sequence $f(p_{i-1,j}), f(p_{i,j})$ and $f(p_{i+1,j})$ is strictly monotonous.
Indeed, if, for example, $f(p_{i,j}) \geq \max \{ f(p_{i-1,j}), f(p_{i+1,j})\}$, then by the Lagrange Theorem, for some $q_1,q_2 \in U$, we have
$f'_x(q_1) \leq 0 \leq f'_x(q_2)$, which, by the continuity of $f'_x$, yields that  for some $q \in U$, we have  $f'_x(q) = 0$, which contradicts the definition of $A$.
If $f(p_{i,j}) \leq \min \{ f(p_{i-1,j}), f(p_{i+1,j})\}$, we can apply a similar argument, and thus, $p_{i,j}$ is not a stationary grid vertex.

In the next part, we prove (2). Without loss of generality, assume that $f(p)=0$.
Note that since $p$ is a local minimum, both eigenvalues $\lambda_1 \leq \lambda_2$ of the Hessian of $f$ at $p$ are positive.
Let $P_2$ denote the second order Taylor polynomial of $f$  centered at $p$.
Then $P_2$ is a quadratic form with eigenvalues $\frac{\lambda_1}{2} > 0$ and $\frac{\lambda_2}{2} > 0$, and the curve $P_2 = 1$ is an ellipse.
Now, since $f$ is $C^3$-class, there is some $\bar{\alpha} \in \Re$ such that for every $(x,y) \in D$, we have
\[
| f(x,y)-P_2(x,y) | < \frac{\bar{\alpha}}{\sqrt{2}} \left( |x|^3 + x^2 |y| + |x| y^2 + |y|^3 \right) = \frac{\bar{\alpha}}{\sqrt{2}} \left( |x| + |y| \right) \left( x^2 + y^2 \right) \leq \bar{\alpha} \left( x^2 + y^2 \right)^{3/2},
\]
which yields that for some suitable $\alpha \in \Re$, we have  $| f(q)-P_2(q) | \leq \alpha \left( P_2(q) \right)^{3/2}$ for every $q \in D$.
 Thus, for any $\varepsilon > 0$ there is a neighborhood $U$ of $p$ such that
\begin{itemize}
\item for every $q \in U$, we have $f(q) > 0$, and $| f(q)-P_2(q)| < \varepsilon P_2(q)$,
\item $f$ is convex in $U$.
\end{itemize}
Observe that the second condition holds for any convex neighborhood of $p$, where the Hessian of $f$ has only positive eigenvalues, and the existence of such a neighborhood follows from the fact that $f$ is $C^3$-class.
Now, since $P(q)$ is homogeneous, every point $q \in D$, with $f(q)=\alpha$, is contained between the ellipses $P_2(q) =  (1-\varepsilon) \alpha$ and $P_2(q) =  (1+\varepsilon) \alpha$. Note that if $\varepsilon$ is sufficiently small, for any value of $\alpha$ and any point $q$ of the level curve $f(x,y)=\alpha$, the angle between the two tangent lines of the ellipse $P_2(x,y) =  (1-\varepsilon) \alpha$, passing through $q$, is at least $\frac{\pi}{3}$.

Fix any `fine' equidistant partition $F_n$, and consider the level curves $f(x,y) = \alpha$, as $\alpha \geq 0$ increases. Let $\bar{p}$ be the first grid vertex that reaches the boundary of such a curve (note that according to our assumptions, there is a unique such grid vertex). Clearly, $f(\bar{p})$ is minimal among all the grid vertices in $U$.
Let
\begin{equation}\label{eq:r}
 r \geq \frac{\sqrt{a^2+b^2}}{\min \{ a,b\}} \cdot \frac{\lambda_2}{\lambda_1} \cdot \sqrt{\frac{1+\varepsilon}{1-\varepsilon}} .
\end{equation}
In the remaining part of the proof of (2), we show that there is no other grid vertex in $U$ which is minimal within its grid circle of radius $r$.

Assume, for contradiction, that the grid vertex $q$ is minimal within $C_r(q)$, and let $f(q)=\beta$. Then the level curve $f(x,y)=\beta$ already contains some grid vertex $q'$ in its interior. Note that the semi-axes of the ellipse $P_2(x,y)=t$ are of length $\sqrt{\frac{2t}{\lambda_i}}$, where $i=1,2$. Recall that the curve $f(x,y)=\beta$ is contained in the ellipse $P_2(x,y) = (1+\varepsilon) \beta$, and the diameter of the latter curve is $2\sqrt{\frac{2(1+\varepsilon)\beta}{\lambda_1}}$. Since, according to our assumption, $q'$ is contained in the interior of $P_2(x,y) = (1+\varepsilon) \beta$, and $f(q') < f(q)$, we obtain that
\begin{equation}\label{eq:whatweknow}
r \delta < 2\sqrt{\frac{2(1+\varepsilon)\beta}{\lambda_1}},
\end{equation}
where $\delta = \min \left\{ \frac{a}{n}, \frac{b}{n} \right\}$ denotes the minimal distance between any two grid vertices.

Let $w$ be the point of $P_2(x,y) = (1-\varepsilon) \beta$ closest to $q$. Let $\Delta = \frac{\sqrt{a^2+b^2}}{n} = \frac{\sqrt{a^2+b^2}}{\min \{ a,b\}} \delta$, and observe that any circle of diameter $\Delta$ contains a grid vertex. We show that the circle $C$ of diameter $\Delta$, touching the ellipse $P_2(x,y) = (1-\varepsilon) \beta$ at $w$ from inside, is contained in the ellipse. By Blaschke's Rolling Ball Theorem  \cite{Blaschke}, to do this it suffices to show that $\frac{\Delta}{2}$ is not greater than any radius of curvature of the ellipse.
It is a well-known fact that the radius of curvature at any point of an ellipse with semi-axes $M \geq m$ is at least $\frac{m^2}{M}$ and at most $\frac{M^2}{m}$.
Thus, a simple computation yields that what we need to show is
\begin{equation}\label{eq:whatweneed}
\Delta \leq 2 \frac{\sqrt{2(1-\varepsilon) \beta \lambda_1}}{\lambda_2}.
\end{equation}
To show (\ref{eq:whatweneed}), we can combine (\ref{eq:whatweknow}) with the definition of $r$ in (\ref{eq:r}).

Let $\bar{C}$ be the circle of radius $\Delta$ that touches the tangent lines of the ellipse $P_2(x,y) = (1-\varepsilon) \beta$ through $q$.
Since $f$ is convex in $U$, the level curve $f(x,y)=\beta$ is also convex, and thus, this circle is also contained inside the level curve $f(x,y)=\beta$. On the other hand, $\bar{C}$ as any other circle of diameter $\Delta$, contains a grid vertex $q''$. Then, our previous observation yields that $f(q'') < \beta = f(q)$. To finish the proof, we show that $\bar{C}$ is contained in the circle of radius $r \delta$, centered at $q$, which implies that $q''$ is contained in the grid circle of radius $r$, centered at $q$.

Assume, for contradiction, that it is not so. Let $\phi$ be the angle between the two tangent lines of the ellipse $P_2(x,y)=(1-\varepsilon) \beta$, through $q$. Since the angle between these two tangent lines is at least $\frac{\pi}{3}$, a simple computation yields that the distance of the center of $\bar{C}$ and $q$ is at most $\Delta$, and hence no point of $\bar{C}$ is farther from $q$ than $\frac{3}{2} \Delta = \frac{3\sqrt{a^2+b^2}}{2 \min \{ a,b\}} \delta \leq r \delta$, which finishes the proof of (2).

To prove (3), we can apply (2) for the function $-f$.

Finally, we prove (4). Let $f(p) = 0$. Then, in a neighborhood $U$ of $q$, the set $\{ f(q) = 0 \}$, $q \in U$ can be decomposed into the union of two $C^2$-class curves, crossing each other at $q$, and for any $\alpha \neq 0$, the set $\{ f(q) = \alpha \}$, $q \in U$ is the union of two disjoint, $C^2$-class curves.
Furthermore,  if $U$ is sufficiently small, there is some sufficiently small $\phi>0$ and $\varepsilon > 0$ such that for any $q \in U$
\begin{itemize}
\item there is a closed angular domain $A$ with apex $q$ and angle $\phi$ such that for any point $q' \in A$ with $0 < |q'-q| < \varepsilon$, we have $f(q) < f(q')$;
\item there is a closed angular domain $B$ with apex $q$ and angle $\phi$ such that for any point $q' \in B$ with $0 < |q'-q| < \varepsilon$, we have $f(q) > f(q')$.
\end{itemize}
Clearly, for a sufficiently large $r$ (chosen independently of $q$), any such closed angular domain in $U$ contains a vertex of $C_r(q)$, which yields the assertion.
\end{proof}

\begin{thm}\label{thm:main}
Le $f$ have $S$ local minima and $U$ local maxima. Then there is some $r$ such that for any sufficiently large $n$, exactly $S$ grid vertices of $F_n$ are minimal, and exactly $U$ grid vertices of $F_n$ are maximal within their grid circles of radius $r$.
\end{thm}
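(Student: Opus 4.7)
The strategy is to reduce Theorem~\ref{thm:main} to Theorem~\ref{thm:auxiliary} by a standard compactness argument on $D = [0,a] \times [0,b]$. I would classify each point $p \in D$ as non-stationary, a local minimum, a local maximum, or a saddle, so that the appropriate clause of Theorem~\ref{thm:auxiliary} supplies an open neighborhood $U(p)$ (and, in parts (2)--(4), a radius $r(p)$). By compactness a finite subcover $U_1, \ldots, U_N$ exists; I would set $r := \max\{1, r_1, \ldots, r_N\}$ and, possibly after shrinking each $U_j$ so that the grid circle $C_r(q)$ of every $q \in U_j$ still lies inside the original neighborhood furnished by Theorem~\ref{thm:auxiliary}, choose $n$ larger than the maximum of the finitely many ``sufficiently large $n$'' thresholds appearing in parts (1)--(4).

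With these choices I would count the minimal and maximal grid vertices one neighborhood at a time. Neighborhoods of non-stationary points contribute zero of each, because being minimal or maximal within a grid circle of radius $r \geq 1$ forces a vertex to be stationary (its value is $\leq$ or $\geq$ both members of each opposite-neighbor pair), contradicting part (1). Saddle neighborhoods contribute zero by part (4). Each local-minimum neighborhood contributes exactly one minimal grid vertex by part (2), and symmetrically each local-maximum neighborhood contributes exactly one maximal vertex by part (3). Summing over the finite cover yields $S$ minimal and $U$ maximal grid vertices, as required.

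The main obstacle is the one case not addressed explicitly in Theorem~\ref{thm:auxiliary}: near a local minimum $p$, no grid vertex should be \emph{maximal} within its grid circle (and, symmetrically, near a local maximum no grid vertex should be minimal). To fill this gap I would refine the argument from the proof of part (2): after shrinking $U(p)$ so that $f$ is strictly convex there and the quadratic term of its Taylor expansion dominates, for any grid vertex $q \in U(p)$ the grid vertex of $C_r(q)$ lying on the ray from $p$ through $q$ (on the side farther from $p$) has strictly larger $f$-value than $q$; if $q$ coincides with the unique minimal grid vertex supplied by part (2), every grid-circle neighbor is strictly larger. Hence $q$ is not maximal. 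Applying the same reasoning to $-f$ disposes of the symmetric statement near local maxima, and the counting above then goes through without further obstruction.
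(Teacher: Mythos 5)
Your proposal is correct and follows essentially the same route as the paper's own proof: cover $D$ by the neighborhoods furnished by Theorem~\ref{thm:auxiliary}, fix $r$ as the largest of the finitely many radii, take $n$ so large that every grid circle lies inside its neighborhood, and add up the contributions of parts (1)--(4). The extra step you supply --- ruling out maximal grid vertices near a local minimum (and minimal ones near a maximum), which the paper leaves implicit in ``the theorem follows'' --- is a legitimate refinement, and your convexity argument works; only note that no grid vertex need lie exactly on the ray from $p$ through $q$, so one should instead take the neighbor of $q$ one grid step in an axis direction making an acute angle with $\grad f(q)$ and use strict convexity of $f$ along that grid line, which gives the same conclusion.
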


\begin{proof}
Fix some $r$ such that any stationary point $q$ of $f$ has some neighborhood that satisfies the corresponding conditions in (2), (3) or (4) of Theorem~\ref{thm:auxiliary}.
Observe that we can choose $\varepsilon_1,\varepsilon_2>0$ such that
\begin{itemize}
\item if $q$ is a stationary point, the assertion in (2), (3) or (4) Theorem~\ref{thm:auxiliary} holds in the $\varepsilon_1$-neighborhood $U_q$ of $q$;
\item if $q$ is not a stationary point, and its distance from any stationary point is at least $\varepsilon_1$, then (1) holds in the $\varepsilon_2$-neighborhood $U_q$ of $q$.
\end{itemize}

Now, let $n$ be large enough such that for any point $q \in D$, $C_r(q) \subset U(q)$, and for any stationary point, (2), (3) and (4) can be applied, and then, the theorem follows.
\end{proof}

\begin{rem}
We may apply Theorem~\ref{thm:main} for a parametrized convex surface $r=r(u,v)$, with $x,y$ as $(u,v)$, and $z=f(x,y)$ as the distance function $\| r(u,v)\|$.
\end{rem}

\subsection{Numerical example for evolving surface displaying the singular limit for local equilibria}\label{ss:3Dnum}

Here we give a 3D illustration for the phenomenon described in Theorem \ref{thm:caustic}. However, instead of regarding an evolution of the surface itself which would induce a simultaneous evolution for the caustics and the reference point we only treat a simpler case where both the surface and the caustics are constant and we move the reference point along a self-defined trajectory.

\begin{figure}
\includegraphics[width=0.95\textwidth]{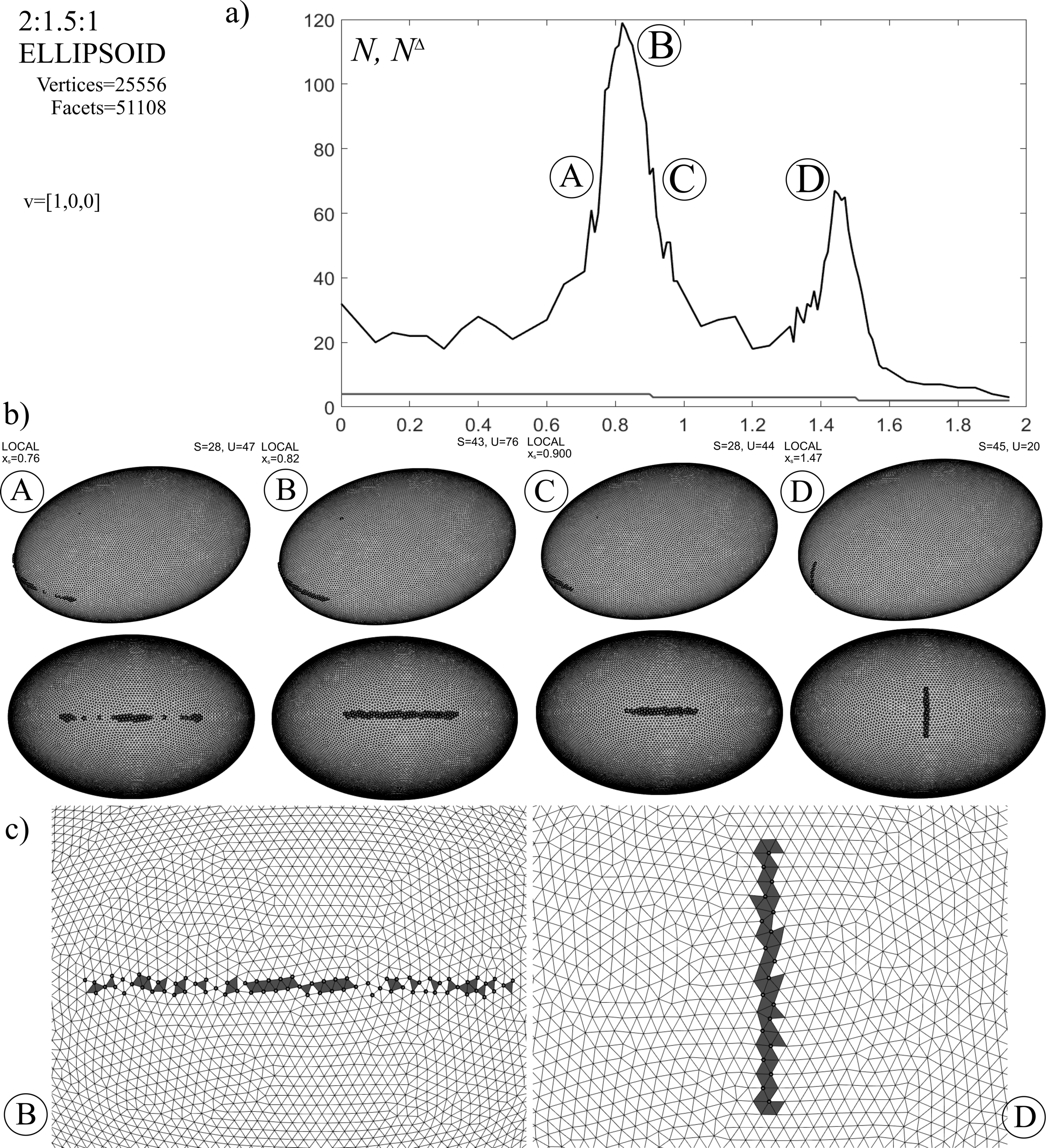}
\caption[]{The reference point is moved along the major axis of an ellipsoid. Panel a) shows the co-evolution of the number of global (red) and local (black) equilibria. The two peaks in the number of the local equilibria is due to the double intersection with the caustics. Panel b) is four snapshots about the emerging flocks on the surface from a general viewpoint (top) and a vantage point along the major axis (bottom). Panel c) shows enlarged versions of the B and D flocks from panel b).}
\label{fig:generic}
\end{figure}

The surface used for these computations is a triangulated ellipsoid with $a=2.0, b=1.5, c=1.0$. The approximately equidistant triangulation of the surface with $V=25556$ vertices and $F=51108$ facets was generated by DistMesh \cite{distmesh}. Offsetting the reference point in the direction $v=(1,0,0)$ the caustics is crossed twice, (Figure \ref{fig:generic}). Note, that $v$ is directed along the major axis, hence it produces reference points which are in the $(xy)$ and $(xz)$ planes, both are planes of symmetry of the object. Both intersections with the caustics produces peaks in the number of local equilibria, and due to symmetry these crossings belong to the same surface point, $p=(2,0,0)$. Observe that the spatial expansion of the local equilibria in any of the flocks reveals the line of curvature on the surface at $p$. As the reference point is moved from the center, the first crossing of the caustics takes place at the smaller principal curvature at $p$; it is straightforward that the flock unfold in the horizontal plane (cases A, B and C on the figure). Similarly, the second crossing of the caustics (associated with the higher principal curvature) is associated with a flock spread in the vertical plane (case D).

\begin{figure}
\includegraphics[width=0.95\textwidth]{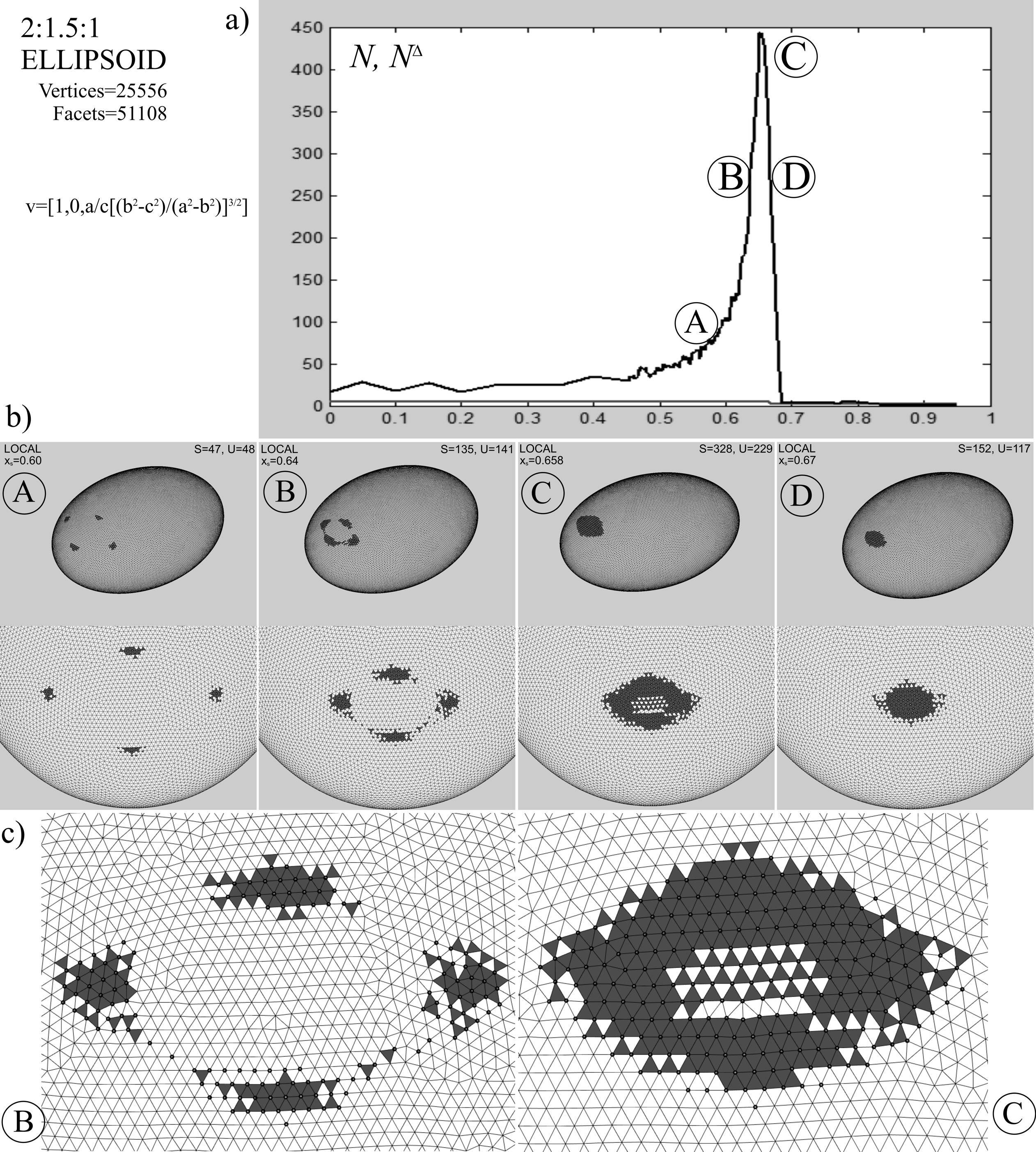}
\caption[]{The reference point is moved towards the point of the caustics that corresponds to one of the umbilical points of the surface. Panel a) shows the co-evolution of the number of global (red) and local (black) equilibria. The single peak in the number of the local equilibria is due to the single crossing with the caustics. Panel b) is four snapshots about the emerging flocks on the surface from a general viewpoint (top) and a vantage point along the major axis (bottom). Panel c) shows enlarged versions of the B and C flocks from panel b). }
\label{fig:umbilic}
\end{figure}

The umbilical point of the ellipsoid represents a special case: as the two principal curvatures are equal, any direction in the tangent plane is tangential to a line of curvature. Hence, we expect a spatially distributed flock. This phenomenon is illustrated in Figure \ref{fig:umbilic}. The displacement of the reference point takes place in the 
\begin{align}
\nonumber v=\left(\frac{1}{a}\frac{(a^2-b^2)^{3/2}}{(a^2-c^2)^{1/2}},0,\frac{1}{c}\frac{(b^2-c^2)^{3/2}}{(a^2-c^2)^{1/2}}\right)
\end{align}

\noindent direction, the distance of the caustics from the center is $\left\|v\right\|\cong 1.0477$. In accordance with Theorem 3, the number of local equilibria suddenly drops as the caustics is crossed, indicating an annihilation of global equilibria. Here two saddles, a stable and an unstable balancing points merge to form a single stable equilibrium.

\section{Discussion and conclusions}\label{Discussion}
 In this paper we constructed a theory connecting the number $N(t)$ of static equilibrium points on one-parameter
families of smooth curves to the evolution of the number $N^{\Delta}(t)$ of equilibrium points on their finely discretized approximations. First we show that if $r(t)$ is non-smooth then
the relationship between  $N^{\Delta}(t)$ and $N(t)$ may be rather different.

\subsection{Non-smooth evolution}

As pointed out in Theorem 4 in \cite{Matgeo}, smooth, generic bifurcations of equilibria may only occur if the \emph{spatial order} 
(i.e. the order of the highest spatial derivative) in the evolution equation is at least two. This is the case for curvature-driven evolution, however, there are other
evolution equations highly relevant for abrasion models which are of lower order. One of the most prominent examples is the Eikonal equation which may be written as
\begin{equation}\label{eikonal}
v=1,
\end{equation}
where $v$ denotes the speed in the direction of the inward surface normal. Equation (\ref{eikonal}) may be also written in polar coordinates, in two dimensions in the
PDE notation for the radial evolution of a curve $r(\varphi)$ as
\begin{equation}\label{PDE}
\frac{\partial r}{\partial t}=\frac{1}{r}\sqrt{r^2+\left(\frac{\partial r}{\partial \varphi}\right)^2}.
\end{equation}
As we can see, the Eikonal equation is of first spatial order.
Unlike curvature-driven flows, (\ref{PDE}) does not preserve the smoothness of the evolving manifold.  As long as $r(t)$ remains smooth (approximately until
$t=1700$ on Figure \ref{fig:eikonal}), $N(t)$ remains constant and
once $r(t)$ becomes non-smooth, $N(t)$ decreases monotonically \cite{Eikonal}. However, the coupling between  $N^{\Delta}(t)$ and $N(t)$
is in this case rather different: here the downward jumps of $N(t)$ are \emph{not} coupled to
resonance-like ($A$-type) events in the evolution of $N^{\Delta}(t)$, rather, both evolutions have a downward trend (see our example illustrated in Figure \ref{fig:eikonal}).

\begin{figure}
\includegraphics[width=0.90\textwidth]{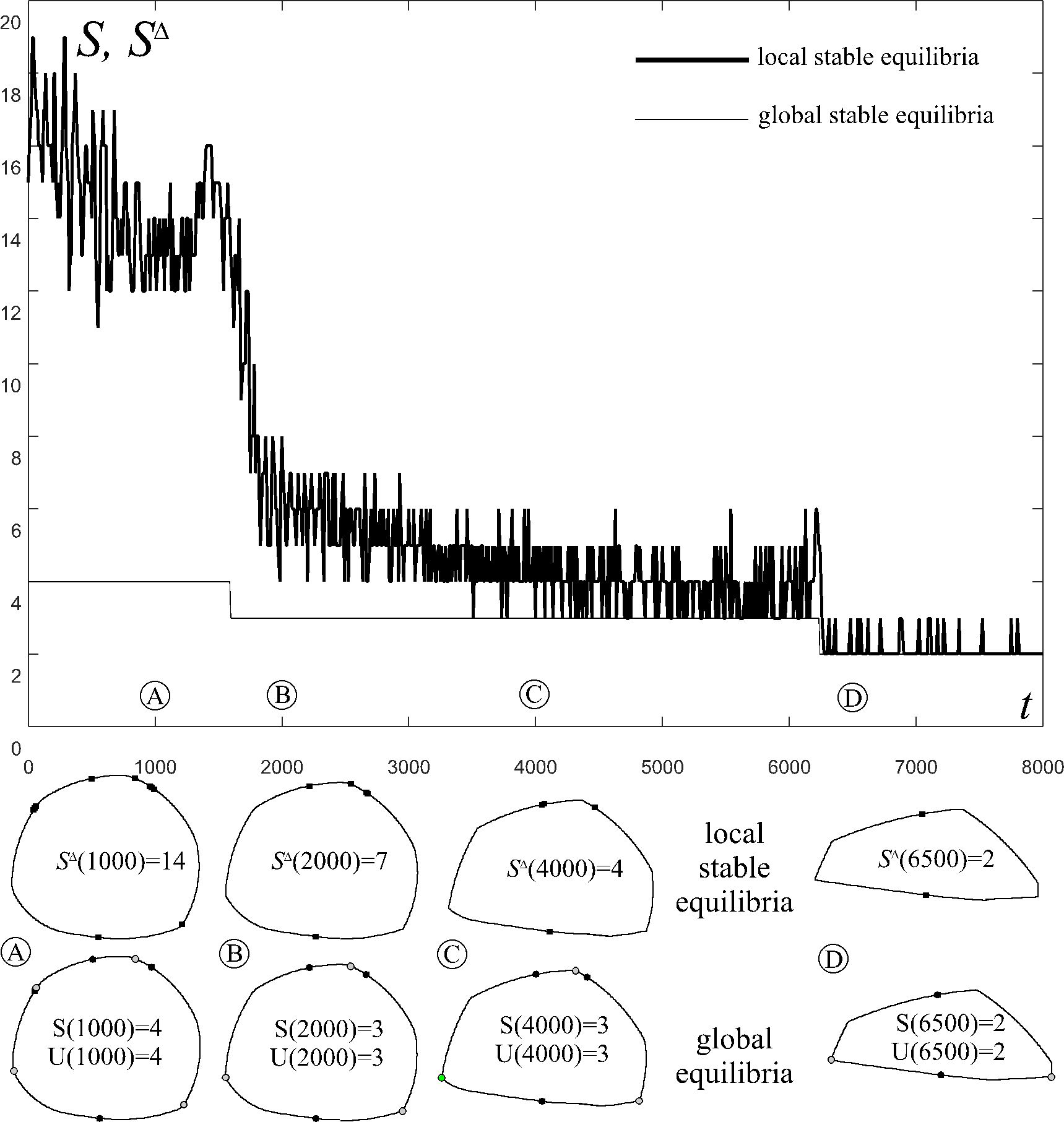}
\caption[]{ Co-evolution of local and global equilibria in 2D under the Eikonal equation (\ref{PDE}). An initially smooth curve with $4$ stable and $4$ unstable balance points is evolved
in manner such that the perimeter of the shape is kept unit. At $t\approx 1700$ vertices evolve and thus the curve ceases to be smooth. The simulation is carried out until $2$ stable and $2$ unstable (global) equilibria remain. Note stark contrast with Figure \ref{fig:coev} in the evolution of $S^{\Delta}(t)$}
\label{fig:eikonal}
\end{figure}

It is worth noting that if $r$ is a polytope then, for sufficiently fine mesh-size we may choose discretizations where edges and vertices
of $r^{\Delta}$ coincide with the edges and vertices of $r$. In this case, evidently, we have $N^{\Delta}(t) \equiv N(t)$ and we can observe 
a closely related scenario for $t>4000$ in Figure \ref{fig:eikonal}.  This observation illustrates that in this
case the tendency of the evolution of $N^{\Delta}(t)$ and  $N(t)$ is similar, in stark contrast with the smooth scenario discussed in the main body of the paper
(compare Figures \ref{fig:coev} and \ref{fig:eikonal}).

\subsection{Related other phenomena}

Our analysis shows that for smooth functions $r(t)$ the evolutions of $N(t)$ and $N^{\Delta}(t)$
are strongly coupled and the evolution in the discretized system can help to
forecast changes in the smooth system. In particular, \emph{downward} jumps in $N(t)$ are preceded by resonance-like
divergence in $N^{\Delta}(t)$. While the evolution of $N(t)$ is characteristic of the physical process, both in computer simulations and laboratory experiments $N^{\Delta}(t)$ is the observable quantity, so these results provide a tool to understand
the former by observing the latter.

The fact that a discretization may carry information relevant to predict the behavior of the underlying original system
has been observed before, although in quite a different context. In case of the Gibbs phenomenon 
one tries to reconstruct a signal with jump discontinuity by using
 partial sum of harmonics. However, no matter how many harmonics are included
in the partial sum, the original signal is recovered with a significant error because large oscillations occur
near the discontinuity.  As the frequency of the added harmonics increases, the overshoot does not die out, rather, it approaches a finite limit. By monitoring the oscillations due to the overshoot,
the discretized system can be used to forecast the jump in the original system. While the discretization happened in a function space (rather than in physical space), nevertheless, the Gibbs phenomenon is still reminiscent of the phenomena
described in our paper. The appearance of 'tygers' in the discretized Burgers and Euler equations \cite{tyger}, \cite{villani}
is analogous to the Gibbs phenomenon, however, here we regard the discretization of solutions to evolution equations. Similarly to the Gibbs phenomenon, here also the sudden jump (shockwave) in the solution  is preceded by large oscillations in the Fourier approximation and thus a critical event in the continuous system is reflected by resonance-type behavior in the corresponding discretization. 

Both previous examples referred to Fourier-type discretizations. It is also known that spatial discretizations (closer to the topic of our paper) may yield "parasitic" solutions not present in the continuous system. Most often parasitic solutions are regarded as a mere numerical embarrassment \cite{doedel}, nevertheless, the example of
\emph{ghost solutions} in elasticity \cite{ghosts} shows that, similarly to the current problem, they could also contribute to the understanding of the underlying continuous system.

\section{Acknowledgement}
The authors are most indebted to Phil Holmes for reading the initial manuscript and giving
essential advice on several aspects. We also thank L\'aszl\'o Sz\'ekelyhidi for drawing our attention to the analogy with tygers.  
This research has been supported by the NKFIH grant K 119245.

\end{document}